\newtheorem{theorem}{Theorem}[section]
\newtheorem{corollary}[theorem]{Corollary}
\newtheorem{lemma}[theorem]{Lemma}
\newtheorem{proposition}[theorem]{Proposition}
\theoremstyle{definition}
\newtheorem{ex}[theorem]{Example}
\newenvironment{example}
	{
	\pushQED{\qed}\begin{ex}}
	{\popQED\end{ex}}
\numberwithin{equation}{section}
\def\N{\mathbb{N}}
\def\R{\mathbb{R}}
\def\C{\mathbb{C}}
\def\Z{\mathbb{Z}}
\def\Rng{\mathcal{R}}
\def\Ker{\mathcal{N}}
\def\H{\mathcal{H}}
\newcommand{\B}[1][\H]{\boldsymbol{B}(#1)}
\newcommand{\bspace}[1][\H]{\ell_2(\Z, #1)}
\def\dim{\operatorname{dim}}
\def\spec{\sigma}
\newcommand{\bshift}[1]{#1 \sim \{#1_n\}_{n\in\Z}}
\newcommand{\ushift}[1]{#1 \sim \{#1_n\}_{n\in\N}}
\newcommand{\ue}{\cong}
\newcommand{\hlf}{\frac{1}{2}}
\begin{document}

\title[Unitary equivalence of bilateral shifts]{On unitary equivalence of bilateral operator valued weighted shifts}
\author{Jakub Ko\'smider}
\address{Instytut Matematyki, Uniwersytet Jagiello\'nski, ul. \L{}ojasiewicza 6, PL-30348 Kra\-k\'ow, Poland}
\email{jakub.kosmider@im.uj.edu.pl}

\thanks{}
\dedicatory{}

\subjclass[2010]{Primary 47B37; Secondary 47A62}

\keywords{unitary equivalence, bilateral shift, partial isometry, quasi-invertible weights}

\begin{abstract}
We establish a characterization of unitary equivalence of two bilateral operator valued weighted shifts with quasi-invertible weights by an operator of diagonal form.
We also present an example of unitary equivalence between shifts defined on $\mathbb{C}^2$ which cannot be given by any unitary operator of diagonal form.
The paper is concluded with investigation of unitary operators than can give unitary equivalence of bilateral operator valued weighted shifts.
\end{abstract}
\maketitle

\section{Introduction and preliminaries}
Classical weighted shift operators and their properties have already been studied for a long time by many authors (see, e.g., \cite{shields, athavale, pietrzycki, geher}).
By classical weighted shifts we understand both unilateral and bilateral weighted shifts defined on $\C$. 
There are many papers devoted to problems of weighted shifts in more general context in which these operators are defined on arbitrary Hilbert spaces (see \cite{lambert, guyker, ivanovski, li_ji_sun, bourhim, jablonski}).
In some of them authors give or use results concerning unitary equivalence (see \cite{lambert, guyker, orovcanec, li_ji_sun, jablonski}).
Jab\l o\'nski, Jung and Stochel introduced in \cite{stochel} the class of weighted shifts on directed trees, which generalizes unilateral and bilateral shifts with classical weights.

Unitary equivalence of unilateral operator valued weighted shifts with invertible weights defined on arbitrary Hilbert space was characterized by Lambert in \cite[Corollary 3.3]{lambert}.
Orov\v{c}anec provided in \cite[Theorem~1]{orovcanec} characterization in case shifts have quasi-invertible weights. 
This result was later proved with weaker assumptions, namely, for unilateral shifts with weights having dense ranges by Anand, Chavan, Jab\l o\'nski, Stochel in \cite[Theorem~2.3]{cauchy}.
Jab\l o\'nski proved in \cite[Propositon~2.2]{jablonski} that unilateral operator valued weighted shift with invertible weights is unitarily equivalent to unilateral operator valued weighted shift with weights $\{T_n\}_{n=0}^\infty$ such that product $T_n\dots T_0$ is a positive operator for all $n\in\N$. 

The are some partial results regarding unitary equivalence of bilateral operator valued weighted shifts.
Li, Ji and Sun proved that each bilateral operator valued weighted shift with invertible weights defined on $\C^m$ for $m\ge 2$ is unitarily equivalent to a shift with upper triangular weights (see \cite[Theorem~2.1]{li_ji_sun}).
Shields provided in \cite{shields} characterization of unitary equivalence in case of classical bilateral shifts. 
Guyker proved in \cite{guyker} a result regarding unitary equivalence of bilateral operator valued weighted shift with the one having positive weights.
The proof required additional assumption i.e., normality and commutativity of weights.

In what follows, we denote by $\N$, $\N_+$, $\Z$, $\R$, $\R_+$ and $\C$ the sets of non-negative integers, positive integers, integers, real numbers, non-negative real numbers and complex numbers, respectively. 
Throughout the paper by $\H$ we denote a nonzero complex Hilbert space. 
The symbol $\B$ stands for the $\mathcal{C}^*$-algebra of all bounded operators defined on $\H$.
All operators considered in this paper are assumed to be linear.
By $\Rng(A)$, $\Ker(A)$ and $\spec(A)$ we understand the range, the kernel and the spectrum of operator $A\in\B$, respectively.
As usual, $I\in\B$ stands for the identity operator. 
Unitary equivalence of operators $A$ and $B\in\B$ will be denoted by $A \ue B$. 
We also write $A \ue_U B$ to emphasize that unitary equivalence is given by $U$. 
For a closed subspace $M$ of $\H$, by $M^{\perp}$ we denote its orthogonal complement.
If $M$ and $N$ are two closed subspaces of $\H$, which are orthogonal, then we write $M\perp N$.
We say that an operator $A\in\B$ is \textit{quasi-invertible}, if $A$ is injective and $\overline{\Rng(A)} = \H$.
The reader can verify that, if $A\in\B$ is quasi-invertible, then so is $A^*$.
For a positive operator $A\in\B$ we denote by $A^\hlf$ the (positive) square root of $A$. 
Operator $A\in\B$ is called a \textit{partial isometry} if $||Ax|| = ||x||$ for all $x\in \Ker(A)^{\perp}$.
The following result is well known and it can be found in \cite[Exercise VIII.3.15]{conway}. 

\begin{lemma}
\label{lemma_isometries}
Let $A\in\B$. Then the following are equivalent:
\begin{enumerate}
	\item $A$ is a partial isometry,
	\item $A^*$ is a partial isometry,
	\item $A^*A$ is the orthogonal projection onto $\Ker(A)^{\perp}$,
	\item $AA^*$ is the orthogonal projection onto $\Rng(A)$,
	\item $AA^*A = A$,
	\item $A^*AA^* = A^*$.
\end{enumerate}
\end{lemma}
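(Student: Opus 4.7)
The plan is to establish the cycle (i) $\Leftrightarrow$ (iii) $\Leftrightarrow$ (v), observe that (v) $\Leftrightarrow$ (vi) follows by taking adjoints, and then conclude (ii) $\Leftrightarrow$ (iv) $\Leftrightarrow$ (vi) by applying the first cycle to $A^*$ in place of $A$ (using that $A^{**}=A$ and $\overline{\Rng(A^*)} = \Ker(A)^\perp$, so the two cycles are genuinely dual). This reduces the work to three implications.

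For (i) $\Rightarrow$ (iii) I would use polarization: the isometric identity $\|Ax\|=\|x\|$ on $\Ker(A)^\perp$ upgrades to $\langle Ax,Ay\rangle = \langle x,y\rangle$ for all $x,y\in\Ker(A)^\perp$, so $\langle(A^*A-I)x,y\rangle=0$ there, forcing $A^*A$ to act as the identity on $\Ker(A)^\perp$; since $A^*A$ visibly vanishes on $\Ker(A)$ and $\H=\Ker(A)\oplus\Ker(A)^\perp$, it is the orthogonal projection onto $\Ker(A)^\perp$. For (iii) $\Rightarrow$ (v), left-multiplying $A^*A = P_{\Ker(A)^\perp}$ by $A$ yields $AA^*A = AP_{\Ker(A)^\perp}=A$, the last equality holding because $A$ annihilates $\Ker(A)$. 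For (v) $\Rightarrow$ (i), first note $(A^*A)^2 = A^*(AA^*A)=A^*A$, so $A^*A$ is a self-adjoint idempotent, hence an orthogonal projection; its kernel equals $\Ker(A)$ (since $\|Ax\|^2=\langle A^*Ax,x\rangle$), so it projects onto $\Ker(A)^\perp$, and then $\|Ax\|^2=\langle A^*Ax,x\rangle=\|x\|^2$ for every $x\in\Ker(A)^\perp$.

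The passage (v) $\Leftrightarrow$ (vi) is just the adjoint identity. Applying the already proven (i) $\Leftrightarrow$ (iii) $\Leftrightarrow$ (v) to $A^*$ then delivers (ii) $\Leftrightarrow$ (iv) $\Leftrightarrow$ (vi) in one stroke; the only cosmetic point worth double-checking is that (iv) is stated with $\Rng(A)$ rather than $\overline{\Rng(A)}$, which is harmless because once $AA^*$ is shown to be an orthogonal projection its range is automatically closed and coincides with $\overline{\Rng(A)}$. No step is genuinely hard: the only place requiring any care is the polarization move in (i) $\Rightarrow$ (iii), where one must confirm that $\Ker(A)^\perp$ is invariant under the relevant sesquilinear manipulations, which is immediate since it is a linear subspace.
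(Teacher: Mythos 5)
Your proof is correct. Note that the paper itself does not prove this lemma --- it is stated as a known fact with a reference to Conway's book --- so there is no in-paper argument to compare against; your cycle (i) $\Leftrightarrow$ (iii) $\Leftrightarrow$ (v), adjoint duality for (v) $\Leftrightarrow$ (vi), and the application of the first cycle to $A^*$ is the standard and economical way to organize the six equivalences. One small point worth making explicit in the polarization step: from $\langle (A^*A-I)x,y\rangle=0$ for all $x,y\in\Ker(A)^{\perp}$ you can only conclude $(A^*A-I)x=0$ after observing that $(A^*A-I)x$ actually lies in $\Ker(A)^{\perp}$, which holds because $\langle A^*Ax,z\rangle=\langle Ax,Az\rangle=0$ for every $z\in\Ker(A)$. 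Your remark about (iv) being stated with $\Rng(A)$ rather than $\overline{\Rng(A)}$ is also handled correctly: since the projection $AA^*$ has closed range and $\Rng(AA^*)\subseteq\Rng(A)\subseteq\overline{\Rng(A)}=\Rng(AA^*)$, the range of a partial isometry is automatically closed.
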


Despite of the fact that the following lemma is definitely folklore, we will state it for the reader's convenience, as we will refer to it later. 

\begin{lemma}
\label{lemat_unitarne}
Assume that $S$, $T \in \B$ have dense ranges.
If $||Sx|| = ||Tx||$ for all $x \in \H$, then there exists unitary operator $V$ 
on $\H$ such that $VS = T$.
\end{lemma}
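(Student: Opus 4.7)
The plan is to build $V$ directly from the identity $\|Sx\|=\|Tx\|$ by sending $Sx$ to $Tx$ on the range of $S$, then extending by continuity and using the density hypothesis to upgrade the resulting isometry to a unitary.

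First I would define $V_0 \colon \Rng(S) \to \Rng(T)$ by the formula $V_0(Sx) = Tx$ for $x\in\H$. The map is well defined because $Sx_1 = Sx_2$ forces $\|S(x_1-x_2)\| = 0$, hence $\|T(x_1-x_2)\| = 0$ by assumption, so $Tx_1 = Tx_2$. The same norm identity shows that $V_0$ is a linear isometry from $\Rng(S)$ onto $\Rng(T)$.

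Since $V_0$ is a bounded linear map defined on the dense subspace $\Rng(S) \subseteq \H$ (density coming from $\overline{\Rng(S)} = \H$), it extends uniquely to a bounded operator $V \colon \H \to \H$. Continuity preserves the norm, so $V$ is an isometry on all of $\H$, and in particular $\Rng(V)$ is closed. Moreover $\Rng(V) \supseteq \Rng(T)$, which is dense in $\H$ by hypothesis, so $\Rng(V) = \H$. Thus $V$ is a surjective isometry, i.e.\ unitary, and the equality $V(Sx) = V_0(Sx) = Tx$ holds for every $x\in\H$, meaning $VS = T$.

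There is no real obstacle here; the only point requiring a moment's care is the well-definedness of $V_0$, which is exactly where the norm equality is used. The two density hypotheses are then used at separate points: $\overline{\Rng(S)} = \H$ to obtain a globally defined extension $V$, and $\overline{\Rng(T)} = \H$ to promote the isometry $V$ to a unitary.
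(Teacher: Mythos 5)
Your proof is correct and is exactly the standard folklore argument the paper alludes to (the paper itself omits the proof of this lemma): define the norm-preserving map $Sx\mapsto Tx$ on the dense subspace $\Rng(S)$, extend by continuity to an isometry, and use the density of $\Rng(T)$ together with the closedness of the range of an isometry to conclude surjectivity, hence unitarity. All the delicate points --- well-definedness via the norm identity and the separate roles of the two density hypotheses --- are handled properly.
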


We define a Hilbert space $\bspace$ as the space
$\oplus_{n\in \Z} \H$ equipped with the inner product defined by
$\langle x,y\rangle = \sum_{i = -\infty}^{\infty} \langle x_i, y_i\rangle_{\H}$ for  $x,y \in \bspace$.
This space consists of all vectors $x = (\dots, x_{-1}, \boxed{x_0}, x_1, \dots)$ satisfying  
$\sum_{n=-\infty}^{\infty} ||x_n||^2_{\H} < \infty$, where $\boxed{\cdot}$ denotes the $0$th element of $x$. 
Operator $U\in\B[\bspace]$ can be expressed as infinite matrix $[U_{i,j}]_{i,j\in\Z}$, where $U_{i,j} \in \B$ for all $i,j\in\Z$.

We say that $S \in \B[\bspace]$ is a \textit{diagonal operator} if
there exists a two-sided sequence of operators $\{S_n\}_{n\in \Z} \subseteq \B$ such that $\{||S_n||\}_{n\in\Z}$ is bounded and  
$$
S(\dots, x_{-1}, \boxed{x_0}, x_1, \dots) = (\dots, S_{-1}x_{-1}, \boxed{S_0x_0}, S_1x_1, \dots), \quad x \in \bspace.
$$

Let $\{S_n\}_{n\in \Z} \subseteq \B$ be a two-sided sequence of nonzero operators such that $\{||S_n||\}_{n\in\Z}$ is bounded.
We define $S\in\B[\bspace]$ by
$$
	S(..., x_{-1}, \boxed{x_0}, x_1, ...) = 
	(..., S_{-1}x_{-2}, \boxed{S_{0}x_{-1}}, S_{1}x_{0}, ...), \quad x \in \bspace.
$$
Operator $S$ is called a \textit{bilateral operator valued weighted shift} on $\H$ with operator weights $\{S_n\}_{n\in\Z}$ and it will be denoted by $\bshift{S}$.
Denote by $F$ the unitary bilateral operator valued weighted shift with all weights being identity operators on $\H$.
We say that an operator $S \in \B[\bspace]$ is \textit{of diagonal form} if there exist $k\in\Z$ and a diagonal operator $T \in \B[\bspace]$ such that $S = F^k T$.

Let $\bshift{S}$. We can represent $S$ by the following infinite matrix 
$$S =
\begin{bmatrix}
 \ddots & \ddots &  \ddots &  \ddots   & \ddots  \\
 \ddots & S_{0} & \boxed{0} & 0 & \ddots \\
 \ddots & 0  & S_1 & 0 & \ddots  \\
 \ddots & 0 & 0 & S_2 & \ddots \\
 \ddots & \ddots & \ddots& \ddots& \ddots
\end{bmatrix}
$$
where $\boxed{\cdot}$ 
indicates the element indexed by $(0,0)$.
It is worth noting that, as opposed to \cite{lambert}, we do not assume that weights of $S$ are invertible.

In this paper we focus on the problem of unitary equivalence of bilateral operator valued weighted shifts with quasi-invertible weights.
The paper is organized as follows. 
In Section~2 we investigate unitary equivalence given by operators of diagonal form.
Corollary~\ref{corollary_main_theorem} establishes the characterization of unitary equivalence of bilateral operator valued weighted shifts with quasi-invertible weights given by an operator of diagonal form.
In Theorem~\ref{positive_equivalence} we prove that each bilateral operator valued weighted shift with quasi-invertible weights is unitarily equivalent to a bilateral weighted shift having positive weights. 
We conclude this section with proving that bilateral operator valued weighted shift having normal and commuting weights defined on $\C^m$ for $m\ge 2$ is unitarily equivalent to a bilateral weighted shift with weights being diagonal operators (see Proposition~\ref{propo}).
 
Section~3 is devoted to the problem of unitary equivalence given by operators that are not of diagonal form and to investigation of unitary operators on $\bspace$ that can give unitary equivalence of weighted shifts.
We begin it with Example~\ref{unitary_equiv_ex} that shows two bilateral operator valued weighted shifts defined on $\C^2$ which are unitarily equivalent, but the unitary equivalence is not given by any operator of diagonal form.
Proposition~\ref{proposition_two_diagonals} states that, if $U\in\B[\bspace]$ contains exactly two nonzero diagonals and all other elements of $U$ are zero operators, then the operators on these diagonals are partial isometries.
We also investigate unitary operators that give unitary equivalence of bilateral weighted shifts defined on $\C^m$ for $m\ge 2$.
Proposition~\ref{proposition_isometry} states that under some additional assumptions, if $U\in\B[\bspace[\C^2]]$ is unitary and all elements of $U$ except for three diagonals are zero operators, then one of the diagonals contains only zero operators.

Finally, Section~4 contains final remarks and concludes some open problems related to unitary equivalence of bilateral weighted shifts.

\section{Unitary equivalence given by an operator of diagonal form}

In this section we present results related to unitary equivalence of bilateral operator valued weighted shifts given by an operator of diagonal form.
It contains also some general facts which usage is not limited to this section.

We will begin with stating the following key lemma required for further references,
which is a two-sided counterpart of \cite[Lemma]{orovcanec} (see also \cite[Lemma~2.1]{lambert} and \cite[Proposition~5 (a)]{shields}).
Its proof is left to the reader.

\begin{lemma}
\label{lemat_podstawowy}
Let $\bshift{S}$, $\bshift{T}$ and $S_n$, $T_n$ be quasi-invertible for each $n\in\Z$. 
Assume that $A\in\B[\bspace]$. 
Then the following are equivalent:
\begin{enumerate}
\item $AS = TA$,
\item $A_{i+1,j+1} S_j = T_i A_{i,j}$ for each $i,j\in\Z$.
\end{enumerate}
\end{lemma}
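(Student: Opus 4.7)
The plan is to prove the equivalence by a direct block-matrix computation. Any $A\in\B[\bspace]$ is completely determined by its block entries $A_{i,j}\in\B$, which can be extracted via $A_{i,j}h=\bigl(A\,e_j(h)\bigr)_i$, where $e_j(h)\in\bspace$ denotes the vector whose $j$-th coordinate equals $h$ and whose remaining coordinates vanish. Consequently, the operator identity $AS=TA$ on $\bspace$ is equivalent to the family of block identities $(AS)_{i,j}=(TA)_{i,j}$ in $\B$ for every pair $i,j\in\Z$, and the task reduces to computing both sides of each such equation.

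For this, I would use the fact that by the definition of a bilateral operator valued weighted shift, $S$ and $T$ have exactly one nonzero block entry per row and per column, placed one step below the diagonal (as exhibited explicitly by the matrix picture preceding the lemma). Thus the block series $(AS)_{i,j}=\sum_k A_{i,k}S_{k,j}$ collapses to the single term corresponding to the unique $k$ for which $S_{k,j}\neq 0$, yielding a product of the form $A_{i,\cdot}S_{\cdot}$; analogously, $(TA)_{i,j}=\sum_k T_{i,k}A_{k,j}$ collapses to an expression of the form $T_iA_{\cdot,j}$. After equating these two expressions for every $i,j\in\Z$ and relabeling the indices, one recovers precisely condition (ii).

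No genuine obstacle arises: the quasi-invertibility of the weights plays no role in this particular argument and is carried only because it is the standing hypothesis under which Lemma~\ref{lemat_podstawowy} is later invoked. The only point requiring some routine care is the justification of the block-matrix calculus on $\bspace$ itself, namely that every $A\in\B[\bspace]$ admits a well-defined decomposition into uniformly bounded block entries, that its action on an arbitrary $x\in\bspace$ is reconstructed from those entries, and that operator composition corresponds to the usual block-matrix product. Each of these facts is verified first on finitely supported vectors and then extended by boundedness and continuity, which is presumably why the author leaves the proof to the reader.
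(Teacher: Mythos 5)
Your plan --- extracting the block entries of $AS$ and $TA$ by applying them to the vectors $e_j(h)$, using that $S$ and $T$ have exactly one nonzero block per row and per column so that the block sums collapse to single terms, and noting that two bounded operators on $\bspace$ coincide iff all their block entries do --- is precisely the computation the author intends (the paper leaves the proof to the reader), and your observation that quasi-invertibility of the weights is never used in this equivalence is correct. One caveat: with the paper's stated convention $(Sx)_n = S_n x_{n-1}$ (equivalently $S_{i,i-1}=S_i$, as in the displayed matrix), the computation gives $(AS)_{i,j}=A_{i,j+1}S_{j+1}$ and $(TA)_{i,j}=T_iA_{i-1,j}$, hence $A_{i+1,j+1}S_{j+1}=T_{i+1}A_{i,j}$, which agrees with the stated condition (ii) only after lowering both weight indices by one (i.e.\ under the alternative convention that $S_n$ acts on the $n$th coordinate, which is the convention the rest of Section~2 implicitly uses); so ``relabeling the indices'' will not literally produce (ii) from the paper's definition of the shift --- this off-by-one is an inconsistency internal to the paper rather than a flaw in your argument, but you should not claim the stated indices fall out automatically.
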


There is a significant difference between \cite[Lemma]{orovcanec} and the one presented above. 
In the case of unilateral weighted shifts every vector in the range of a shift has the zero as the first element.
Hence, each operator intertwining two unilateral weighted shifts has a triangular matrix. 
In the case of bilateral weighted shifts equality $AS = TA$ does not imply triangularity of $A$ (see Example \ref{unitary_equiv_ex} below).

Lemma~\ref{lemat_podstawowy} gives the following important result.

\begin{corollary}
\label{corollary_podstawowy}
Let $\bshift{S}$, $\bshift{T}$ and $S_n$, $T_n$ be quasi-invertible for each $n\in\Z$. 
Assume that $A\in\B[\bspace]$ be such that $AS=TA$. 
If $A_{i,j} \ne 0$ for some $i,j\in\Z$, then $A_{i+n,j+n} \ne 0$ for all $n\in\Z$.
\end{corollary}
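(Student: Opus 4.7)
The plan is to deduce the result from Lemma~\ref{lemat_podstawowy} by a two-sided induction along the diagonal $\{(i+n,j+n) : n \in \Z\}$, exploiting quasi-invertibility of the two weight sequences in complementary ways. Lemma~\ref{lemat_podstawowy} gives the crucial recurrence
\[
A_{i+1,j+1}\, S_j \;=\; T_i\, A_{i,j}, \qquad i,j \in \Z,
\]
which is the only tool I intend to use.

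For the forward step, assume $A_{i,j}\neq 0$ and note that $T_i$ is injective (it is quasi-invertible), so $T_i A_{i,j}\neq 0$. The identity above then forces $A_{i+1,j+1} S_j \neq 0$, hence $A_{i+1,j+1}\neq 0$. A straightforward induction handles all $n\geq 0$.

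For the backward step, I would argue by contradiction: suppose $A_{i-1,j-1}=0$. Applying the recurrence at the shifted index gives $A_{i,j} S_{j-1} = T_{i-1} A_{i-1,j-1} = 0$, so $A_{i,j}$ vanishes on $\Rng(S_{j-1})$. Since $S_{j-1}$ has dense range and $A_{i,j}$ is bounded, this forces $A_{i,j}=0$, contradicting the hypothesis. Again a trivial induction extends this to all $n\leq 0$.

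There is no real obstacle; the only point worth emphasizing is that the two directions of the induction use different halves of the quasi-invertibility hypothesis: injectivity of $T_i$ drives the forward step, while density of $\Rng(S_j)$ drives the backward step. This asymmetry is why the hypothesis is imposed on both weight sequences rather than on only one.
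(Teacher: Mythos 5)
Your argument is correct and is precisely the intended deduction from Lemma~\ref{lemat_podstawowy}: the paper states the corollary as an immediate consequence of that lemma without writing out the details, and your forward step (injectivity of $T_i$) and backward step (density of $\Rng(S_{j-1})$ plus continuity of $A_{i,j}$) fill them in exactly as one would expect. Nothing to add.
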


It follows from Corollary \ref{corollary_podstawowy} that the unitary operator
\newcommand{\sqc}{\frac{1}{\sqrt{2}}}
$$
\begin{bmatrix}
 \ddots & \ddots	& \ddots 	   & \ddots 		& \ddots 	& \ddots& \ddots  \\
 \ddots & I			& 0 	 	   & 0		 		& 0		 	& 0     & \ddots  \\
 \ddots & 0			& \sqc I	   & 0 				& \sqc I  	& 0 	& \ddots  \\
 \ddots & 0			& 0 		   & \boxed{I} 		& 0 		& 0 	& \ddots  \\
 \ddots & 0			& \sqc I	   & 0 				& -\sqc I 	& 0 	& \ddots  \\
 \ddots & 0			& 0 		   & 0 				& 0 		& I		& \ddots  \\
 \ddots & \ddots	& \ddots 	   & \ddots			& \ddots	& \ddots& \ddots
\end{bmatrix}
$$
does not give unitary equivalence between any two bilateral operator valued weight\-ed shifts with quasi-invertible weights.

The following theorem gives a necessary and sufficient condition for two bilateral operator valued weighted shifts with quasi-invertible weights to be unitarily equivalent by an operator of diagonal form.
Proof of this fact is based on the proof of similar result for unilateral operator valued weighted shifts from \cite[Theorem~2.3]{cauchy} (see also \cite[Theorem~1]{orovcanec}).

\begin{theorem}
\label{main_theorem}
Let $\bshift{S}$, $\bshift{T}$ and $m\in\Z$ be such that $S_{m+n}$, $T_n$, $S_{m-n-1}^*$ and $T_{-n-1}^*$ have dense ranges for $n\in\N$. 
Then the following are equivalent 
\begin{enumerate}
\item there exists $U\in\B[\bspace]$ of diagonal form such that $S \ue_U T$ and $U_{0,m}\ne 0$,
\item there exists unitary operator $U_{0,m}\in\B$ such that the following hold:
	\begin{enumerate}
	\item $||S_{m+n-1}\dots S_mx|| = ||T_{n-1}\dots T_0U_{0,m}x||$ for all $x\in \H$ and $n \in \N_+$,
	\item $||S_{m-n}^*\dots S_{m-1}^*x|| = ||T_{-n}^*\dots T_{-1}^*U_{0,m}x||$ for all $x\in \H$ and $n \in \N_+$.
	\end{enumerate}
\end{enumerate}
\end{theorem}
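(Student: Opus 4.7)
My plan is to decode the structure of an operator of diagonal form that intertwines $S$ and $T$, and then show that the unitary operators appearing along its single nonzero diagonal are characterized exactly by the norm conditions (a) and (b).

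For the implication (i)$\Rightarrow$(ii), I will start by observing that if $U=F^{k}D$ with $D$ diagonal and $U_{0,m}\ne 0$, then a direct matrix computation forces $k=-m$, so $U$ has nonzero entries only on the shifted diagonal $\{(i,i+m):i\in\Z\}$. Setting $V_i:=U_{i,i+m}$, the unitarity of $U$ makes each $V_i$ unitary, and Lemma~\ref{lemat_podstawowy} applied to $US=TU$ yields the chain of relations $V_{i+1}S_{i+m}=T_iV_i$ for all $i\in\Z$. Iterating upward from $i=0$ gives $V_{n}S_{m+n-1}\cdots S_m=T_{n-1}\cdots T_0V_0$, so the unitarity of $V_n$ delivers condition~(a); the symmetric iteration downward from $i=-1$, followed by passage to adjoints, delivers condition~(b).

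For the converse (ii)$\Rightarrow$(i) the plan is to inductively construct unitaries $V_i$ ($i\in\Z$) starting from $V_0:=U_{0,m}$, satisfying $V_{i+1}S_{i+m}=T_iV_i$ for every $i$, and then to assemble them into $U$ by placing $V_i$ at position $(i,i+m)$. The tool at every step is Lemma~\ref{lemat_unitarne}. For $n\ge 0$, assuming inductively that $V_n$ is unitary with $V_nS_{m+n-1}\cdots S_m=T_{n-1}\cdots T_0V_0$, I will need the identity $\|S_{m+n}x\|=\|T_nV_nx\|$ for all $x\in\H$. It first holds on the dense subspace $\Rng(S_{m+n-1}\cdots S_m)$ by combining condition~(a) at exponent $n+1$ with the inductive identity and the unitarity of $V_n$, and then extends to $\H$ by continuity; Lemma~\ref{lemat_unitarne} then furnishes the unitary $V_{n+1}$, with the dense-range hypotheses met because $S_{m+n}$ has dense range by assumption and $T_nV_n$ has dense range since $T_n$ does and $V_n$ is unitary. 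For $n\le -1$ the construction is symmetric: one works with the adjoint formulation $S_{m-1}^*V_0^*=V_{-1}^*T_{-1}^*$, uses condition~(b), the dual inductive identity $V_{-k}^*T_{-k}^*\cdots T_{-1}^*=S_{m-k}^*\cdots S_{m-1}^*V_0^*$, and the dense-range hypotheses on $S_{m-n-1}^*$ and $T_{-n-1}^*$.

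Once every $V_i$ has been produced, I will define $U$ by $U_{i,j}:=V_i$ when $j=i+m$ and $0$ otherwise; this $U$ is unitary, has the required diagonal form (concretely, $U=F^{-m}D$ with $D_{n,n}=V_{n-m}$), and the relations $V_{i+1}S_{i+m}=T_iV_i$ translate via Lemma~\ref{lemat_podstawowy} into $US=TU$. The main obstacle I anticipate is the density-and-continuity argument at each induction step: one has to align the products $S_m,S_{m+1},\ldots$ on one side with $T_0,T_1,\ldots$ on the other (and symmetrically for the adjoint products on the negative side) so that the norm identity required by Lemma~\ref{lemat_unitarne} genuinely holds on a dense subspace before being extended to all of $\H$ and used to manufacture the next unitary.
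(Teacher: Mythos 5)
Your proposal is correct and follows essentially the same route as the paper: both directions rest on Lemma~\ref{lemat_podstawowy} for the entrywise intertwining relations and on Lemma~\ref{lemat_unitarne} plus a density argument for the inductive construction of the diagonal unitaries $V_i$. The only (harmless) variation is in the induction step of (ii)$\Rightarrow$(i), where you verify the single-step norm identity $\|S_{m+n}x\|=\|T_nV_nx\|$ on a dense subspace and extend by continuity before invoking Lemma~\ref{lemat_unitarne}, whereas the paper applies that lemma to the full products and then extracts the one-step relation by the same kind of density argument.
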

\begin{proof}
(i) $\Rightarrow$ (ii).
Assume that $S \ue_U T$, where $U\in\B[\bspace]$ is of diagonal form.
Let $n \in \N_+$. Then, by Lemma~\ref{lemat_podstawowy},
$$
U_{n,m+n}S_{m+n-1}\dots S_{m} = T_{n-1}\dots T_0U_{0,m}.
$$
which implies (a).
Let us now check that (b) also holds. 
Let $n \in \N_+$. 
Again, by Lemma~\ref{lemat_podstawowy},
$$
U_{0,m}S_{m-1}\dots S_{m-n} = T_{-1}\dots T_{-n}U_{-n,m-n}
$$
which is equivalent to the following
$$
S_{m-1}\dots S_{m-n}U_{-n,m-n}^* = U_{0,m}^*T_{-1}\dots T_{-n}.
$$
After taking adjoints we get that for all $x \in \H$ and $n \in \N_+$ it is true that
$$
||S_{m-n}^*\dots S_{m-1}^*x|| = ||T_{-n}^*\dots T_{-1}^*U_{0,m}x||,
$$
which proves (b).

(ii) $\Rightarrow$ (i).
We will construct $U\in\B[\bspace]$ of diagonal form with unitary operators $U_{n,m+n}\in\B$ on its diagonal, which satisfy the following 
\begin{equation}
\label{1233322}
	U_{n+1,m+n+1}S_{m+n} = T_{n}U_{n,m+n}, \quad n\in\Z .
\end{equation}
In order to simplify formulas we introduce notation $V_{n} := U_{n,m+n}$ for $n\in\Z$.

We will begin with constructing operators $V_n$ for $n\in\N_+$.
Since $S_m$ and $T_0V_0$ have dense ranges and (a) holds with $n=1$, then, by Lemma~\ref{lemat_unitarne}, there exists unitary $V_1$ such that 
$V_1S_{m} = T_{0}V_0$.
Now, assume that $n>1$ and unitary operators $V_1,\dots,V_n$ are already defined to be such that 
$V_{i+1}S_{m+i} = T_{i}V_{i}$ for $i\in\{1,\dots,n-1\}$.
Again, we use Lemma~\ref{lemat_unitarne} for operators $S_{m+n}\dots S_{m}$ and $T_{n}\dots T_0V_0$, which have dense ranges and get that there exists a unitary operator $V_{n+1}$ such that 
$$
V_{n+1}S_{m+n}\dots S_{m} = T_{n}\dots T_{0}V_0.
$$
By the above we see that 
\begin{equation}
\label{sd9h}
(V_{n+1}S_{m+n} - T_nV_{n})S_{m+n-1}\dots S_{m} = V_{n+1}S_{m+n}\dots S_{m} - T_{n}\dots T_{0}U_0 = 0.
\end{equation}
Since $S_{m+n-1}\dots S_{m}$ has dense range, \eqref{sd9h} implies that 
$V_{n+1}S_{m+n} = T_nV_{n}$.

We will now focus on finding operators $V_{-n}$ for $n\in\N_+$.
We begin with definition of $V_{-1}$.
Since $S_{m-1}^*$ and $T_{-1}^*U_{0,m}$ have dense ranges and (b) holds for $n=1$, by Lemma~\ref{lemat_unitarne}, there exists a unitary $V_{-1}$ such that
$V_{-1}S_{m-1}^* = T_{-1}^*U_0$.
This implies that $V_0S_{m-1}=T_{-1}V_{-1}$.
Let $n>1$. Assume that $V_{-1},\dots,V_{-n+1}$ are already defined unitary operators on $\H$ such that $V_{-i+1}S_{m-i} = T_{-i}V_{-i}$ for 
$i \in \{1,\dots,n-1\}$.
We will construct $V_{-n}$ such that $V_{-n+1}S_{m-n} = T_{-n}V_{-n}$.
It is enough to find $V_{-n}$ so that the following holds
$$
V_{-n}S_{m-n}^*\dots S_{m-1}^* = T_{-n}^*\dots T_{-1}^*V_0,
$$
because then we will get the following equality
$$
V_0S_{m-1}\dots S_{m-n} = T_{-1}\dots T_{-n}V_{-n}.
$$
We get $V_{-n}$ by using Lemma~\ref{lemat_unitarne} for operators $S_{m-n}^*\dots S_{m-1}^*$ and $T_{-n}^*\dots T_{-1}^*V_0$ with dense ranges.
Now, we only need to show that $V_{-n+1}S_{m-n} = T_{-n}V_{-n}$. 
We will do this by proving that $V_{-n}S_{m-n}^* = T_{-n}^*V_{-n+1}$, which is an equivalent condition.
Let us consider the following:
$$
(V_{-n}S_{m-n}^* - T_{-n}^*V_{-n+1})S_{m-n+1}^*\dots S_{m-1}^* = 
	V_{-n}S_{m-n}^*\dots S_{m-1}^* - T_{-n}^*\dots T_{-1}^*V_0 = 0.
$$
Since $S_{m-n+1}^*\dots S_{m-1}^*$ has dense range, $V_{-n}S_{m-n}^* = T_{-n}^*V_{-n+1}$. 

We constructed sequence $\{U_{n,m+n}\}_{n\in\Z}$ of unitary operators such that $\eqref{1233322}$ holds.
By Lemma~\ref{lemat_podstawowy} it is true that $S\ue_UT$, where $U$ is of diagonal form.
This completes the proof.
\end{proof}

It is worth noting that, if we additionally assume that $S$ and $T$ have quasi-invertible weights in Theorem~\ref{main_theorem}, then we can choose any other operator $U_{k,m+k}$ instead of $U_{0,m}$ for $k\in\Z$ and modify the statement. 
In this way we get the following result.

\begin{corollary}
\label{corollary_main_theorem}
Let $\bshift{S}$, $\bshift{T}$ have quasi-invertible weights and let $m\in\Z$. 
Then the following are equivalent
\begin{enumerate}
\item there exists $U\in \B[\bspace]$ of diagonal form such that $S \ue_U T$ and $U_{0,m}\ne 0$,
\item there exist $k\in\Z$ and unitary operator $U_{k,m+k}\in\B$ such that the following hold:
	\begin{enumerate}
	\item $||S_{m+n+k-1}\dots S_{m+k}x|| = ||T_{n+k-1}\dots T_kU_{k,m+k}x||$ for all $x\in \H$ and $n \in \N_+$,
	\item $||S_{m-n+k}^*\dots S_{m-1+k}^*x|| = ||T_{-n+k}^*\dots T_{-1+k}^*U_{k,m+k}x||$ for all $x\in \H$ and $n \in \N_+$.
	\end{enumerate}
\end{enumerate}
\end{corollary}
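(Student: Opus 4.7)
The plan is to deduce this corollary from Theorem~\ref{main_theorem} by a reindexing argument. For a fixed $k \in \Z$, I would introduce the auxiliary bilateral shifts $\tilde S \sim \{S_{n+k}\}_{n\in\Z}$ and $\tilde T \sim \{T_{n+k}\}_{n\in\Z}$. Since all weights of $S$ and $T$ are quasi-invertible, all weights of $\tilde S$, $\tilde T$ and their adjoints have dense ranges, so the hypotheses of Theorem~\ref{main_theorem} are automatically satisfied for $\tilde S$, $\tilde T$ with the given $m$. A direct matrix computation shows $\tilde S = F^{-k} S F^k$ and $\tilde T = F^{-k} T F^k$; consequently $\tilde S \ue_V \tilde T$ if and only if $S \ue_{F^k V F^{-k}} T$.

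The key observation is that conjugation by $F^k$ preserves the class of operators of diagonal form. Writing $V = F^j W$ with $W$ diagonal having entries $\{W_n\}_{n\in\Z}$, one checks that $F^k V F^{-k} = F^j W'$, where $W'$ is the diagonal operator with entries $W'_n = W_{n-k}$. An equally routine computation on matrix entries shows $(F^k V F^{-k})_{k, m+k} = V_{0, m}$. Combined with Corollary~\ref{corollary_podstawowy}, which guarantees that a single nonzero entry of a diagonal-form intertwiner propagates along the whole diagonal (so that $U_{0,m} \ne 0$ if and only if $U_{k, m+k} \ne 0$), this shows that condition (i) of the corollary is equivalent to the existence of $V \in \B[\bspace]$ of diagonal form intertwining $\tilde S$ and $\tilde T$ with $V_{0,m} \ne 0$, which is exactly condition (i) of Theorem~\ref{main_theorem} applied to $\tilde S$, $\tilde T$.

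The final step is substitution. Theorem~\ref{main_theorem} applied to $\tilde S$, $\tilde T$ and $m$ characterises that condition by the existence of a unitary $V_{0,m}$ satisfying $||\tilde S_{m+n-1}\dots \tilde S_m x|| = ||\tilde T_{n-1}\dots \tilde T_0 V_{0,m} x||$ for $n\in\N_+$, $x\in\H$, together with the analogous adjoint identity. Re-expressing $\tilde S_n = S_{n+k}$, $\tilde T_n = T_{n+k}$ and setting $U_{k, m+k} := V_{0,m}$ turns these into precisely conditions (a), (b) of the corollary for this particular $k$. I do not foresee any serious obstacle; the only mildly delicate part is the matrix-level bookkeeping verifying that conjugation by $F^k$ preserves diagonal-form operators and transports the $(0,m)$-entry to the $(k, m+k)$-entry.
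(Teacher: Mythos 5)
Your proposal is correct and is essentially the paper's intended argument: the paper merely remarks that under quasi-invertibility one may "choose any other operator $U_{k,m+k}$ instead of $U_{0,m}$" in Theorem~\ref{main_theorem}, and your conjugation by $F^k$ (together with Corollary~\ref{corollary_podstawowy} to propagate the nonvanishing of $U_{0,m}$ along the diagonal) is the natural way to make that reindexing precise. The bookkeeping you describe checks out, so no gap.
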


Next result that we will prove is the unitary equivalence of bilateral operator valued weighted shift with the one having positive weights. 
Shields proved in \cite{shields} that each bilateral weighted shift with weights $\{a_n\}_{n\in\Z}\subseteq \C$ is unitarily equivalent to the shift with weights $\{ |a_n|\}_{n\in\Z}$. 
This fact follows from \cite[Theorem~1]{shields}.
Pietrzycki used it to prove that each bounded injective classical bilateral weighted shift $S$ satisfying $S^{*n}S^n=(S^*S)^n$ for any $n\ge 2$ is quasinormal (see \cite[Theorem~3.3]{pietrzycki}). 
Jab\l o\'nski, Jung and Stochel generalized Shields{'} result to the class of weighted shifts on directed trees (see \cite[Theorem~3.2.1]{stochel}).

In the case of bilateral operator valued weighted shifts the situation is more complicated. 
Guyker proved in \cite[Theorem~1]{guyker} that, if $\bshift{S}$ has weights that are commuting and normal operators, then $S$ is unitarily equivalent to the bilateral operator valued weighted shift with weights of the form $(S_n^*S_n)^{\frac{1}{2}}$.
This result is similar to the one of Shields for shifts with classical weights.
Ivanovski mentioned in \cite{ivanovski} that, without loss of generality, we can assume that each bilateral operator valued weighted shift can be assumed to have positive weights. 
He referenced \cite{lambert}. 
However, in \cite{lambert} there is only a proof of unitary equivalence of shifts with those of positive weights for unilateral operator valued weighted shifts with invertible weights.

We will now prove the fact that each bilateral operator valued weighted shift is unitarily equivalent to a bilateral shift with positive weights.
We use argument which is based on similar results from \cite{orovcanec, lambert} for unilateral operator valued weighted shifts.

\begin{theorem}
\label{positive_equivalence}
Let $\bshift{S}$ and $S_n$ be quasi-invertible for all $n \in \Z$. 
Then $S \ue T$, where $\bshift{T}$ and each $T_n$ is positive.
\end{theorem}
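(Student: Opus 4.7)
The plan is to build sequences $\{U_n\}_{n\in\Z}\subseteq\B$ of unitary operators and $\{T_n\}_{n\in\Z}\subseteq\B$ of positive operators satisfying $U_{n+1}S_n = T_nU_n$ for every $n\in\Z$. Once this is done, the diagonal operator $U\in\B[\bspace]$ with entries $U_n$ is unitary and Lemma~\ref{lemat_podstawowy} yields $US = TU$ for the bilateral weighted shift $\bshift{T}$, whence $S\ue_U T$ with positive weights, as required. (Alternatively one could invoke Corollary~\ref{corollary_main_theorem}, but a direct construction seems cleaner.)

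The tool is polar decomposition, applied repeatedly. The key observation is that if $A\in\B$ is quasi-invertible, then the partial isometry in its polar decomposition $A = WQ$ is in fact unitary: $Q = (A^*A)^{\hlf}$ has trivial kernel, because $\Ker Q = \Ker A = \{0\}$, and dense range, because $\overline{\Rng Q} = \Ker(Q)^{\perp} = \H$, so the initial space of $W$ equals $\H$, while its final space equals $\overline{\Rng A} = \H$ by quasi-invertibility of $A$. Since the product of a quasi-invertible operator and a unitary is again quasi-invertible, this observation will apply at every step of the induction below.

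Setting $U_0 := I$, I would run two symmetric inductions. For $n\ge 0$, given unitary $U_n$, polar-decompose $S_nU_n^* = W_nQ_n$ with $W_n$ unitary and $Q_n$ positive, and put $T_n := Q_n$ and $U_{n+1} := W_n^*$; then $U_{n+1}S_nU_n^* = W_n^*W_nQ_n = Q_n = T_n$, i.e.\ $U_{n+1}S_n = T_nU_n$. For $n<0$, given unitary $U_{n+1}$, polar-decompose the quasi-invertible operator $S_n^*U_{n+1}^* = \widetilde{W}_n\widetilde{Q}_n$ and set $U_n := \widetilde{W}_n^*$ and $T_n := \widetilde{Q}_n$; multiplying the identity $S_n^*U_{n+1}^* = U_n^*T_n$ on the left by $U_n$ and taking adjoints then yields $U_{n+1}S_nU_n^* = T_n$, equivalently $U_{n+1}S_n = T_nU_n$. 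Finally $\|T_n\| = \|S_nU_n^*\| = \|S_n\|$ (and analogously for $n<0$), so $\{\|T_n\|\}_{n\in\Z}$ is bounded and $\bshift{T}$ is a bona fide element of $\B[\bspace]$.

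The main conceptual point is the observation in the second paragraph — that quasi-invertibility is preserved along the induction and forces the partial-isometry factor in each polar decomposition to be unitary. Once this is noted, the remainder of the argument is essentially bookkeeping; I do not anticipate any further obstacle.
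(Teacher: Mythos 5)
Your proof is correct, and it takes a genuinely different route from the paper's. Both arguments rest on the same basic fact --- that for a quasi-invertible operator the partial isometry in the polar decomposition is unitary, which you rightly single out as the key point --- but you use it differently. The paper polar-decomposes each weight individually, writes $S = F\tilde U\tilde P$ with $\tilde U$, $\tilde P$ diagonal, invokes Theorem~\ref{main_theorem} to produce a diagonal unitary $V$ with $VF = F\tilde UV$ (the hypotheses hold trivially since $F$ and $F\tilde U$ have unitary weights), and then conjugates to get $S = V(FV^*\tilde PV)V^*$. You instead build the intertwining diagonal unitary directly by a two-sided induction, polar-decomposing the running products $S_nU_n^*$ (resp.\ $S_n^*U_{n+1}^*$) at each step; your relations $U_{n+1}S_n = T_nU_n$ are exactly what Lemma~\ref{lemat_podstawowy} requires for a diagonal intertwiner, your adjoint manipulation in the backward direction is sound (using that $T_n$ is self-adjoint), and the norm bound $\|T_n\| = \|S_n\|$ settles boundedness. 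What your approach buys is self-containedness --- no appeal to Theorem~\ref{main_theorem} or to Lemma~\ref{lemat_unitarne} --- and it is arguably more elementary; what the paper's approach buys is economy given that Theorem~\ref{main_theorem} is already available, plus the structurally transparent factorization of $S$ into a shift part, a unitary diagonal part, and a positive diagonal part. One cosmetic remark: you should note (as you implicitly do via quasi-invertibility of $S_nU_n^*$) that each $T_n$ is nonzero, so that $T$ is indeed a bilateral operator valued weighted shift in the sense defined in the paper.
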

\begin{proof}
It follows from the polar decomposition that for each $n\in\Z$ there exist unitary $U_n$ and positive $P_n$ such that $S_n = U_n P_n$.
Let $\tilde{P}$ and $\tilde{U}$ be diagonal operators on $\bspace$ such that $\tilde{P_n} = P_{n+1}$ and $\tilde{U_n} = U_{n+1}$ for all $n\in\Z$. 
Simple calculation can prove that $S=F\tilde U\tilde P$.
It is easy to verify that condition (ii) from Theorem~\ref{main_theorem} is satisfied as $F$ and $F\tilde{U}$ have unitary weights.
Thus there exists a diagonal operator $V\in\B[\bspace]$ such that $VF=F\tilde{U}V$. 
It is true that
\begin{equation*}
S = F\tilde{U}\tilde{P} = VV^*F\tilde{U}\tilde{P} = VFV^*\tilde{P} = V(FV^*\tilde{P}V)V^*.
\end{equation*}
Observe that $V^*\tilde{P}V$ is a diagonal operator. 
This implies that $FV^*\tilde{P}V$ is a bilateral operator valued weighted shift.
Since unitary equivalence preserves positivity and elements of $V^*\tilde{P}V$ are unitarily equivalent to elements of $\tilde{P}$, the proof is completed.
\end{proof}

Now we will state a useful fact which gives necessary conditions of unitary equivalence of bilateral operator valued weighted shifts given by operator of diagonal form.

\begin{lemma}
\label{lemma_norms}
Let $\bshift{S}$, $\bshift{T}$ have quasi-invertible weights. 
Suppose that $S\ue_U T$, where $U$ is of diagonal form and $U_{0,k}\ne 0$ for some $k\in\Z$.
Then $||S_{n+k}|| = ||T_n||$ for each $n\in\Z$.
\end{lemma}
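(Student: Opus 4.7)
The plan is to exploit the fact that ``of diagonal form'' forces all nonzero entries of $U$ to sit on a single diagonal, and then to read off the norm equality from the intertwining relation provided by Lemma~\ref{lemat_podstawowy}.

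First I would unpack the definition: since $U$ is of diagonal form, $U = F^{j} D$ for some $j \in \Z$ and diagonal $D \in \B[\bspace]$. A direct matrix computation shows that $U_{i,l}$ can be nonzero only when $l - i$ equals a fixed integer (namely $-j$). From the assumption $U_{0,k} \ne 0$ I would conclude that this fixed integer is $k$, so $U_{i,l} = 0$ whenever $l \ne i+k$, while $U_{i,i+k}$ coincides (up to relabelling) with a diagonal entry of $D$.

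Next I would use unitarity of $U$. Computing $(U^*U)_{l,l}$ and $(UU^*)_{i,i}$ with only the single nonzero diagonal present gives
\begin{equation*}
U_{i,i+k}^{*}\,U_{i,i+k} = I \qquad \text{and} \qquad U_{i,i+k}\,U_{i,i+k}^{*} = I \qquad (i \in \Z),
\end{equation*}
so every $U_{i,i+k}$ is a unitary operator on $\H$. Now I would invoke Lemma~\ref{lemat_podstawowy} with the pair $(i, i+k)$ in place of $(i,j)$ to obtain
\begin{equation*}
U_{i+1,\,i+k+1}\,S_{i+k} \;=\; T_{i}\,U_{i,i+k}, \qquad i \in \Z,
\end{equation*}
which, after solving for $S_{i+k}$, yields $S_{i+k} = U_{i+1,i+k+1}^{*}\,T_{i}\,U_{i,i+k}$. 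Since pre- and post-multiplication by unitaries preserves the operator norm, it follows immediately that $\|S_{i+k}\| = \|T_{i}\|$, and relabelling $i$ as $n$ finishes the proof.

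I do not expect any serious obstacle here; the only delicate point is keeping the index bookkeeping straight when translating ``of diagonal form'' into the statement that the single nonzero diagonal passes through position $(0,k)$. Once that is in place the rest is purely formal, and in particular the quasi-invertibility hypothesis is used only to license the application of Lemma~\ref{lemat_podstawowy}.
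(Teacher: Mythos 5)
Your proposal is correct and follows essentially the same route as the paper: apply Lemma~\ref{lemat_podstawowy} to the single nonzero diagonal to get $U_{n+1,n+k+1}S_{n+k}=T_nU_{n,n+k}$, note the diagonal entries are unitary, and conclude the norm equality by conjugation. The only difference is that you spell out in more detail why the entries $U_{n,n+k}$ are unitary, which the paper simply asserts.
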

\begin{proof}
Define $V_n=U_{n,n+k}$ for all $n\in\Z$.
By Lemma~\ref{lemat_podstawowy}, $V_{n+1}S_{n+k} = T_nV_{n}$ for each $n\in\Z$, where operators $U_n$ are unitary.
Therefore, we see that
$T_n = V_{n+1} S_{n+k} V_{n}^*$
and
$S_{n+k} = V_{n+1}^* T_n V_{n}$ for each $n\in\Z$. 
This completes the proof.
\end{proof}

In the following proposition we provide necessary condition of unitary equivalence given by a diagonal operator for $\H = \C^2$.

\begin{proposition}
\label{propsition_spectra}
Let $\bshift{S}$, $\bshift{T}$ be defined on $\C^2$ and have normal weights.
Assume that $S\ue_U T$ where $U$ is a diagonal operator.
Then the modulus of eigenvalues of corresponding weights are equal.
\end{proposition}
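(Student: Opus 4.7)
The plan is to reduce the statement to a unitary equivalence of $S_n^*S_n$ and $T_n^*T_n$ for each fixed $n$, and then exploit normality of the weights to turn eigenvalues of these positive operators into moduli of eigenvalues of $S_n$ and $T_n$.

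First, since $U\in\B[\bspace[\C^2]]$ is both unitary and diagonal, it is determined by a two-sided sequence $\{U_n\}_{n\in\Z}$ of unitary operators on $\C^2$ (unitarity of $U$ forces unitarity of each block on the diagonal). Applying Lemma~\ref{lemat_podstawowy} to $US=TU$ and keeping only the nonzero entries $U_n = U_{n,n}$ gives the intertwining relation
\[
U_{n+1}S_n = T_n U_n, \qquad n\in\Z.
\]
Multiplying by adjoints on the appropriate side yields
\[
T_n^*T_n = U_n S_n^*S_n U_n^*, \qquad n\in\Z,
\]
so $S_n^*S_n$ and $T_n^*T_n$ are unitarily equivalent in $\B[\C^2]$, hence have the same spectrum.

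Now use normality. Each $S_n$ is a normal operator on $\C^2$, so it is unitarily diagonalizable with eigenvalues $\lambda_1^{(n)}, \lambda_2^{(n)} \in \C$; consequently the eigenvalues of $S_n^*S_n$ are exactly $|\lambda_1^{(n)}|^2$ and $|\lambda_2^{(n)}|^2$. The analogous statement holds for $T_n$ with eigenvalues $\mu_1^{(n)},\mu_2^{(n)}$. The spectral identity $\spec(S_n^*S_n)=\spec(T_n^*T_n)$ derived above then gives
\[
\{|\lambda_1^{(n)}|^2,\, |\lambda_2^{(n)}|^2\} = \{|\mu_1^{(n)}|^2,\, |\mu_2^{(n)}|^2\},
\]
and taking square roots finishes the proof.

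There is essentially no obstacle here: the entire argument is a one-line application of Lemma~\ref{lemat_podstawowy} followed by the remark that, for a normal operator $N$ on a finite-dimensional space, the eigenvalues of $N^*N$ are the squared moduli of the eigenvalues of $N$. The only mild point to verify is that a diagonal unitary on $\bspace[\C^2]$ really does have unitary blocks, which is immediate from expanding $UU^*=U^*U=I$ block-by-block.
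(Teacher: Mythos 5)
Your proof is correct, and it takes a genuinely different and in fact cleaner route than the paper. The paper first reduces to diagonal weights, then invokes Corollary~\ref{corollary_main_theorem} to get a unitary $V$ with $\|S_nx\|=\|T_nVx\|$, writes out the resulting system of equations as convex combinations of $|t_1|^2,|t_2|^2$, and pins down the answer by combining this with the norm identity $\max\{|s_1|,|s_2|\}=\max\{|t_1|,|t_2|\}$ from Lemma~\ref{lemma_norms}. You instead go straight from the entrywise intertwining relation $U_{n+1}S_n=T_nU_n$ (the forward direction of Lemma~\ref{lemat_podstawowy}, which is pure matrix-entry computation and needs no hypotheses on the weights) to $T_n=U_{n+1}S_nU_n^*$ and hence $T_n^*T_n=U_nS_n^*S_nU_n^*$, so that $\spec(S_n^*S_n)=\spec(T_n^*T_n)$ with multiplicities; normality then converts these eigenvalues into squared moduli of eigenvalues of the weights. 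Your argument buys several things: it avoids Corollary~\ref{corollary_main_theorem} and Lemma~\ref{lemma_norms} entirely (and with them the implicit quasi-invertibility assumption those results carry, which the proposition as stated does not include), it needs no preliminary diagonalization of the weights, and it works verbatim on $\C^m$ for any $m\ge 2$ rather than only on $\C^2$. The one small point you flag --- that a diagonal unitary on $\bspace[\C^2]$ has unitary blocks --- is indeed immediate from $UU^*=U^*U=I$ computed blockwise, so there is no gap.
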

\begin{proof}
Since all weights are normal matrices, then they are diagonalizable. 
Therefore, it is easy to see that we can diagonalize (using unitary operator) one of the weights in each shifts.
Let $n\in\Z$. By the above we can assume that $S_n$ and $T_n$ are diagonal matrices. 
By Corollary \ref{corollary_main_theorem} there exists unitary $V\in\B[\C^2]$ such that 
$||S_n x|| = ||T_n V x||$ for all $x\in\C^2$.
Let us now assume that 
$$
S_n = 
\begin{bmatrix}
s_1   & 0 \\
  0   & s_2
\end{bmatrix},
\quad
T_n = 
\begin{bmatrix}
t_1   & 0 \\
  0   & t_2
\end{bmatrix},
\quad
V = 
\begin{bmatrix}
v_1   & v_2 \\
v_3   & v_4
\end{bmatrix}
$$

Taking $x = (1,0)$ and $y = (0,1)$, by the previous property, we get the following system of equations: 
\begin{equation*}
\begin{split}
|s_1|^2 = |v_1t_1|^2 + |v_3t_2|^2, \\
|s_2|^2 = |v_2t_1|^2 + |v_4t_2|^2.
\end{split}
\end{equation*}

We see that both equations are convex combinations.
Also, by Lemma~\ref{lemma_norms}, it is true that $\max\{|s_1|, |s_2|\} = \max\{|t_1|, |t_2|\}$. 
Since $V$ is unitary, it must be true that 
\begin{equation*}
\begin{cases}
|s_1| = |t_1|\\
|s_2| = |t_2|
\end{cases}
\text{ or }\quad
\begin{cases}
|s_1| = |t_2|\\
|s_2| = |t_1|
\end{cases}
\end{equation*}
which is exactly our claim.
\end{proof}

We can now use the above result to determine whether two bilateral operator valued weighted shifts on $\C^2$ are unitarily equivalent by a diagonal operator.
First, we use Theorem~\ref{positive_equivalence} to transform both shifts to their forms with positive weights.
Then we compare the eigenvalues of the corresponding weights and check whether their modulus are equal. 
It there is at least one pair of two corresponding weights with at least one different eigenvalue, then it means that eventual unitary equivalence of the shifts cannot be given by a diagonal operator. 
It is important to note that moving to form with positive weights is achieved by using a diagonal operator and, therefore, the argument presented above is correct.

Unfortunately, there is no clear dependency between spectra of weights of original shift and the one with positive weights. 
Another problem is that the condition provided in Proposition~\ref{propsition_spectra} is not sufficient.
To see this let us consider the following

\begin{example}
Let $\H=\C^2$. 
We will set $S_{n} = T_{n} = I$ to be identity operators on $\H$ for $n\in\Z\setminus\{0,1\}$. 
For $n\in\{0,1\}$ we define 
$$
S_n = 
\begin{bmatrix}
s_{1,n}   & 0 \\
  0   & s_{2,n}
\end{bmatrix},
\quad
T_n = 
\begin{bmatrix}
t_{1,n}   & 0 \\
  0   & t_{2,n}
\end{bmatrix}.
$$

Let us fix $|s_{1,0}|=|t_{2,0}|$ and $|s_{2,0}|=|t_{1,0}|$ and $|s_{1,0}| > |s_{2,0}|>0$.
For $S_1$ and $T_1$ we choose $|s_{1,1}|=|t_{1,1}|$ and $|s_{2,1}|=|t_{2,1}|$ and $|s_{1,1}|>|s_{2,1}|>1$.
Now, by Theorem~\ref{main_theorem}, for $S$ and $T$ to be unitarily equivalent by a diagonal operator we need a unitary operator $U\in\B$ such that:
\begin{equation*}
\begin{split}
||S_0x|| = &||T_0Ux||, \\
||S_{1}S_0x|| = &||T_{1}T_0Ux||, \quad x \in \H .
\end{split}
\end{equation*}
But the above cannot be true as first equation determines that $U$ must be equal
$$
U = 
\begin{bmatrix}
 0   & u \\
  v   & 0
\end{bmatrix},
$$
where $|u|=1$ and $|v|=1$. 
In this case, the second equation is not satisfied. 
\end{example}

Li, Ji and Sun proved in \cite[Theorem~2.1]{li_ji_sun} that bilateral weighted shift defined on $\H=\C^k$ is unitarily equivalent to the one with upper triangular weights. 
We will see that, under some additional assumptions, it is possible to prove that some bilateral operator valued weighted shifts are unitarily equivalent to the ones with diagonal weights.

\begin{proposition}
\label{propo}
Let $\bshift{S}$ be a shift in $\bspace[\C^k]$ for $k \ge 2$ with normal and commuting weights. 
Then there is a $\bshift{D}$ such that $S\ue D$ and $D_n$ is a diagonal operator for each $n\in\Z$.
\end{proposition}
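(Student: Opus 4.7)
The plan is to reduce the proposition to the classical finite-dimensional fact that any family of pairwise commuting normal operators on $\C^k$ can be simultaneously unitarily diagonalized, and then to implement this diagonalization globally through a single constant diagonal unitary on $\bspace[\C^k]$.

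First I would invoke the following folklore result for $\C^k$: if $\{N_\alpha\}_{\alpha \in A}$ is any collection of pairwise commuting normal operators on $\C^k$, then there exists a single unitary $V \in \B[\C^k]$ such that $V^* N_\alpha V$ is a diagonal matrix for every $\alpha \in A$. This is proved by iteratively refining a decomposition of $\C^k$ into joint eigenspaces: each eigenspace of a normal operator is reducing for every operator commuting with it (Fuglede's theorem ensuring that commutation is preserved under taking adjoints), and the refinement must terminate in at most $k$ steps since $\dim \C^k = k$, regardless of the cardinality of $A$. Applying this to the family $\{S_n\}_{n \in \Z}$ produces a unitary $V \in \B[\C^k]$ such that $D_n := V^* S_n V$ is a diagonal matrix for every $n \in \Z$. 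Since $||D_n|| = ||S_n||$, the sequence $\{D_n\}_{n \in \Z}$ is bounded and thus defines a bilateral operator valued weighted shift $\bshift{D}$ with diagonal weights.

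It then remains to exhibit the unitary equivalence $S \ue D$. I would take $U \in \B[\bspace[\C^k]]$ to be the diagonal operator whose every diagonal block equals $V$; this $U$ is unitary, with $U^*$ being the analogous diagonal operator whose every block equals $V^*$. A direct computation on an arbitrary $x \in \bspace[\C^k]$ gives
$$
(U^* S U x)_n = V^* S_n V x_{n-1} = D_n x_{n-1} = (Dx)_n, \quad n \in \Z,
$$
so that $U^* S U = D$, which yields $S \ue D$.

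Since the argument reduces to a standard spectral fact on a finite-dimensional space together with a one-line conjugation check, no serious obstacle is anticipated. The only mild subtlety is that simultaneous diagonalization must be applied to an infinite family $\{S_n\}_{n \in \Z}$, but this is legitimate precisely because the ambient space $\C^k$ is finite-dimensional.
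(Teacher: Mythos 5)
Your proposal is correct and follows essentially the same route as the paper: simultaneous unitary diagonalization of the commuting family of normal weights (which the paper cites from Horn--Johnson rather than sketching), followed by conjugation with the constant diagonal operator built from that single unitary $V$.
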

\begin{proof}
It is a well-known fact that any set of normal matrices $\{ T_a\}_{a\in A}$ which commutes with each other can be simultaneously diagonalized \text{i.e.} there exists a unitary matrix $V$ such that $VT_a V^*$ is diagonal for each $a\in A$ (see \cite[Theorem~1.3.19]{horn}). 
Now, we see that a diagonal operator consisting of operators $V$ on its diagonal gives unitary equivalence between $S$ and $\bshift{D}$ where each $D_n$ is diagonal for every $n\in\Z$.
\end{proof}

\section{Unitary equivalence - the non-diagonal case}

In this section we focus on investigation of unitary operators that can give unitary equivalence of bilateral operator valued weighted shifts.
Most of the results concern only finite-dimensional Hilbert spaces.

In \cite[Theorem~1]{shields} one can find a proof of the fact that in case of bilateral shifts on $\bspace[\C]$ unitary equivalence is always given by an operator of diagonal form. 
We will now see that there are bilateral operator valued weighted shifts which are unitarily equivalent, but the unitary equivalence is not given by any operator of diagonal form.

\begin{example}
\label{unitary_equiv_ex}
Assume $\H = \C^2$, $w = \hlf - \hlf\text{i}$ and define
\begin{align}
\label{definition_s_n}
s_n = 
	\begin{cases}
		1,			 & \text{if } n = 0, \\
		\frac{1}{n}, & \text{otherwise}.
	\end{cases}
\end{align} 
Let $\bshift{S}$, $\bshift{T}$ have weights
\begin{equation*}
	\begin{gathered}
	S_n := 
	\begin{bmatrix}
	 s_n  & s_n \\
	 -s_n & s_n	
	\end{bmatrix}, \quad
	T_n := 
	\begin{bmatrix}
	s_{n-1}w + s_{n+1}\bar w   & s_{n-1}\bar w + s_{n+1}w \\
	-s_{n-1}\bar w - s_{n+1}w & s_{n-1}w + s_{n+1}\bar w 
	\end{bmatrix}
	\end{gathered}
\end{equation*}
for $n\in\Z$.
It is easy to see that weights of $S$ and $T$ are invertible, bounded and normal.
We construct unitary operator with two nonzero diagonals which gives unitary equivalence of $S$ and $T$.
Let us define the following operators
$$
A = \frac{1}{2}
\begin{bmatrix}
1  & -\text{i} \\
\text{i} & 1
\end{bmatrix},
\quad
B = \frac{1}{2}
\begin{bmatrix}
1  & \text{i} \\
-\text{i} & 1
\end{bmatrix}.
$$
Both $A$ and $B$ are orthogonal projections onto one-dimensional subspaces.
Moreover, $AB=BA=0$ and $A+B=I$. 
Define
$$
U = 
\begin{bmatrix}
 \ddots & \ddots	& \ddots 	& \ddots & \ddots	\\
 \ddots & 0 	 	& A 	 	& 0      & \ddots  	\\
 \ddots & B	   		& \boxed{0}	& A  	 & \ddots	\\
 \ddots & 0 		& B			& 0 	 & \ddots	\\
 \ddots & \ddots 	& \ddots	& \ddots & \ddots	
\end{bmatrix}
$$
The reader can check that $U$ is unitary and $US = TU$.

Now, we show that it is not possible to find unitary operator of diagonal form which would give unitary equivalence of $S$ and $T$.
First, one can easily verify that 
$||S_n|| = \sqrt{2}|s_n|$ for each $n\in\Z$.
Let us now compute the norms of operators $T_n$.
We find the eigenvalues of $T_n^*T_n$ using the characteristic polynomial
$$
W(\lambda) = \lambda^2 - 2\lambda (s_{n-1}^2 + s_{n+1}^2) + 4s_{n-1}^2 s_{n+1}^2.
$$
The roots of $W$ are $2s_{n+1}^2$ and $2s_{n-1}^2$, hence 
$||T_n|| = 
\operatorname{max}\{\sqrt{2}|s_{n-1}|, \sqrt{2}|s_{n+1}|\}$.
Now, it follows from \eqref{definition_s_n} that
\begin{align}
\label{eq_norms}
\left\{\;
\begin{aligned}
 ||S_i|| &= 1 \text{ if and only if } i \in \{ -1, 0, 1\}, \\
 ||T_i|| &= 1 \text{ if and only if } i \in \{ -2, -1, 0, 1, 2\}.
\end{aligned}
\right.
\end{align}

Suppose that, contrary to our claim, $S$ and $T$ are unitarily equivalent by an operator of diagonal form. 
By Lemma~\ref{lemma_norms}, there exists $k\in\Z$ that $||S_{n+k}|| = ||T_n||$ for all $n\in\Z$.
This contradicts \eqref{eq_norms}.
\end{example}

We presented the example of two bilateral operator valued weighted shifts that are unitarily equivalent by an operator that is not of diagonal form.
We also proved that there is no operator of diagonal form that would give this unitary equivalence.
Let us note that, by Proposition~\ref{propo}, this example can be significantly simplified if we diagonalize all weights before performing any computations. 
We leave the details to the reader.

Example \ref{unitary_equiv_ex} shows even more.
Let us first recall some known results.
Shields showed in \cite[Theorem~1]{shields} that, if two bilateral shifts with complex weights are unitarily equivalent, then there exists $k\in\Z$ such that $|s_n| = |t_{n+k}|$ for each $n\in\Z$.
Moreover, it follows from \cite[Theorem~1]{orovcanec} that, if two unilateral shifts $\ushift{S}$, $\ushift{T}$ with quasi-invertible weights are unitarily equivalent, then the unitary equivalence is given by a diagonal operator. 
It follows from similar argument as in Lemma~\ref{lemma_norms} that $||S_n|| = ||T_n||$ for each $n\in\N$. 
This is not true for bilateral operator valued weighted shifts defined on a Hilbert space of dimension greater then one.

We will now investigate unitary operators on $\bspace$ that can give unitary equivalence of bilateral weighted shifts defined on $\H$.
Note that, as in the case of finite-dimensional Hilbert space quasi-invertibility is the same property as invertibility, Corollary \ref{corollary_podstawowy} already gives us the information that, if any element in a matrix representation of a unitary operator is nonzero, then the entire diagonal containing this element is nonzero. 
Therefore, we will focus only on number of nonzero diagonals in unitary operators.

Let $U$ be a unitary operator acting on $\bspace$ with two nonzero diagonals. Then there exist $k_1, k_2 \in\Z$ 
such that $k_1 \ne k_2$ and operators $U_{n,n+k_1}, U_{n,n+k_2}$ are nonzero elements from these diagonals for all $n\in\Z$.
From now we identify nonzero diagonals of such operators with $k_1$ and $k_2$ and denote $U_n^{(1)}:=U_{n,n+k_1}$, $U_n^{(2)}:=U_{n,n+k_2}$ for all $n\in\Z$. 
Without loss of generality we can assume that $k_2 > k_1$.
We generalize this notation to an arbitrary number of diagonals in $U$.

Next proposition states that, if there are exactly two nonzero diagonals in a unitary operator, then both diagonals contain only partial isometries.

\begin{proposition}
\label{proposition_two_diagonals}
Suppose that $U\in\B[\bspace]$ is a unitary operator that has exactly two nonzero diagonals and all other elements are zero operators. 
Then the elements on these diagonals are partial isometries such that elements in each row of $U$ have orthogonal ranges.
\end{proposition}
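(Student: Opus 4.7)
The plan is to unpack the two identities $U^*U = I$ and $UU^* = I$ at the level of operator-valued matrix entries. With $d := k_2 - k_1 > 0$ and the shorthand $U_n^{(1)} := U_{n,n+k_1}$, $U_n^{(2)} := U_{n,n+k_2}$, row $n$ of $U$ contains exactly the two nonzero entries $U_n^{(1)}$ and $U_n^{(2)}$, while column $j$ of $U$ contains exactly the two nonzero entries $U_{j-k_1}^{(1)}$ and $U_{j-k_2}^{(2)}$.

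First, reading the diagonal of $UU^*$ at position $(n,n)$ gives
\begin{equation*}
U_n^{(1)}(U_n^{(1)})^* + U_n^{(2)}(U_n^{(2)})^* = I, \quad n \in \Z.
\end{equation*}
Next, I examine off-diagonal entries of $U^*U$. Since $(U^*U)_{i,j} = \sum_k U_{k,i}^* U_{k,j}$ and columns $i,j$ share a nonzero row index precisely when $|i-j| = d$, the choice $j = i + d$ leaves the single common index $k = i - k_1$ and reduces $(U^*U)_{i,j} = 0$ to
\begin{equation*}
(U_n^{(1)})^* U_n^{(2)} = 0, \quad n \in \Z,
\end{equation*}
which is exactly the orthogonality of the ranges of the two nonzero operators in row $n$.

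Finally, to establish the partial isometry property, set $P_n := U_n^{(1)}(U_n^{(1)})^*$ and $Q_n := U_n^{(2)}(U_n^{(2)})^*$. Both are positive, the first identity yields $P_n + Q_n = I$, and the second gives $P_n Q_n = U_n^{(1)}\bigl((U_n^{(1)})^* U_n^{(2)}\bigr)(U_n^{(2)})^* = 0$. Hence $P_n^2 = P_n(I - Q_n) = P_n$, so the self-adjoint $P_n$ is an orthogonal projection; symmetrically for $Q_n$. A direct expansion using $P_n^3 = P_n$ then shows $\bigl(U_n^{(1)}(U_n^{(1)})^* U_n^{(1)} - U_n^{(1)}\bigr)\bigl(U_n^{(1)}(U_n^{(1)})^* U_n^{(1)} - U_n^{(1)}\bigr)^* = 0$, so $U_n^{(1)}(U_n^{(1)})^* U_n^{(1)} = U_n^{(1)}$; by Lemma~\ref{lemma_isometries}, $U_n^{(1)}$ is a partial isometry, and the argument for $U_n^{(2)}$ is identical.

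The only care point is the bookkeeping that identifies precisely the $d$-th off-diagonal of $U^*U$ as the source of the orthogonality relation $(U_n^{(1)})^* U_n^{(2)} = 0$; once that is in place, the remaining algebra is routine.
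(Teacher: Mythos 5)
Your proposal is correct and follows essentially the same route as the paper: both extract the row identity $U_n^{(1)}(U_n^{(1)})^* + U_n^{(2)}(U_n^{(2)})^* = I$ from $UU^*=I$ and the relation $(U_n^{(1)})^*U_n^{(2)} = 0$ from the off-diagonal entries of $U^*U$, then combine them to get $U_n^{(1)}(U_n^{(1)})^*U_n^{(1)} = U_n^{(1)}$ and invoke Lemma~\ref{lemma_isometries}. The only cosmetic difference is that you pass through the idempotency of $U_n^{(1)}(U_n^{(1)})^*$ before deriving the partial-isometry identity, whereas the paper obtains it directly by multiplying the row identity on the right by $U_n^{(1)}$.
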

\begin{proof}
Let us fix $k = k_2 - k_1 > 0$.
For simplicity let us set $A_n:=U_n^{(1)}$, $B_n:=U_n^{(2)}$ for all $n\in\Z$.
Both $\{A_n\}_{n\in\Z}$ and $\{B_n\}_{n\in\Z}$ are sequences of bounded operators.
Note that $U$ is unitary if and only if conditions
\begin{subequations}
	\begin{align}
		I &= A_n A_n^* + B_n B_n^*, \label{2_diag_1} \\
		I &= A_{n+k}^* A_{n+k} + B_n^* B_n, \\
		0 &= A_{n+k}B_n^*, \label{2_diag_3} \\
		0 &= A_n^*B_n \label{2_diag_4}, 
	\end{align}
\end{subequations}
hold for all $n\in\Z$.
Now, we can multiply \eqref{2_diag_1} by $A_n$ from the right and get
\begin{equation}
\label{eq_2}
A_n = A_n A_n^* A_n + B_n B_n^* A_n.
\end{equation}
Now, by \eqref{2_diag_4} and \eqref{eq_2}, we see that $B_n B_n^* A_n = 0$ and thus, by Lemma~\ref{lemma_isometries}, $A_n$ is a partial isometry for all $n\in\Z$. 
It is clear that operators $B_n$ are also partial isometries.
From \eqref{2_diag_4} we deduce that $\Rng(A_n)$ is orthogonal to $\Rng(B_n)$ for all $n\in\Z$.
This completes the proof.
\end{proof}

It is worth noting that, using the property \eqref{2_diag_3}, we can deduce that $\Rng(B_n^*) \perp \Rng(A_{n+k}^*)$ for all $n\in\Z$.

Next example shows that sequences $\{A_n\}_{n\in\Z}$, $\{B_n\}_{n\in\Z}$ do not need to be sequences of orthogonal projections.

\begin{example}
Let $\H=\C^2$. 
We define the following
$$
A_n = 
\begin{bmatrix}
  0 & a_n \\
  0 & 0
\end{bmatrix},
\quad
B_n = 
\begin{bmatrix}
  0   & 0 \\
 b_n  & 0
\end{bmatrix},
$$
where $|a_n| = |b_n| = 1$ for all $n\in\Z$.
The reader can verify that these operators satisfy conditions \eqref{2_diag_1} - \eqref{2_diag_4} from the proof of Proposition~\ref{proposition_two_diagonals} and form a unitary operator $U$ with $k_1=-1$ and $k_2=1$.
Now we define $\bshift{S}$ in the following way
$$
S_n = 
\begin{bmatrix}
s_{1,n}   & 0 \\
  0   & s_{2,n}
\end{bmatrix},
$$
where $s_{1,n} s_{2,n} \ne 0$ for all $n\in\Z$. 
It is easy to check that $USU^*$ is a bilateral operator valued weighted shift and neither $A_n$ nor $B_n$ are orthogonal projections for any $n\in\Z$.
\end{example}

Now we will prove useful lemma that we will use later in the paper.
We provide more general version than we need, which is true for an arbitrary nonzero Hilbert space.

\begin{lemma}
\label{lemma_orthogonal}
Let $\H$ be a Hilbert space and $n\in\N_+$. 
Assume that $A_i\in\B$ are positive operators for $i\in\{1,\dots,n\}$ such that 
\begin{equation}
\label{equation_sum_of_orthogonal}
C := A_1 + \dots + A_n
\end{equation} 
and $\dim\Rng(C)=1$.
Then $A_i = a_iC$, where $a_i\in\R_+$ for all $i\in\{1,\dots,n\}$ and $\sum_{i=1}^n a_i=1$.
\end{lemma}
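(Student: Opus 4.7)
The plan is to show that each $A_i$ is forced to be a nonnegative scalar multiple of a fixed rank-one projection, so that the identity $\sum A_i = C$ reduces to a scalar one.

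First I would exploit that $0 \le A_i \le C$ for every $i$, which follows from $C - A_i = \sum_{j \ne i} A_j \ge 0$. This operator inequality gives the standard kernel inclusion $\Ker(C) \subseteq \Ker(A_i)$: if $Cx = 0$, then $\|A_i^{\hlf}x\|^2 = \langle A_i x, x\rangle \le \langle Cx, x\rangle = 0$, so $A_i x = 0$. Taking orthogonal complements and using self-adjointness of both operators yields $\overline{\Rng(A_i)} \subseteq \overline{\Rng(C)} = \Rng(C)$, the last equality being automatic since $\dim\Rng(C)=1$ forces $\Rng(C)$ to be closed.

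Next I would pick a unit vector $e \in \Rng(C)$ and let $P \in \B$ denote the rank-one projection $Px = \langle x, e\rangle e$. Since $C$ is a nonzero positive self-adjoint operator with range $\Rng(C)$, its restriction to the one-dimensional invariant subspace $\Ker(C)^\perp = \Rng(C)$ must be multiplication by some $\gamma > 0$ while $C$ annihilates $\Ker(C)$; hence $C = \gamma P$. Applying the same reasoning to each $A_i$---whose invariant subspace $\Ker(A_i)^\perp$ is either $\{0\}$ or $\Rng(C)$ by the previous paragraph---gives $A_i = \mu_i P$ for some $\mu_i \in \R_+$.

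Substituting into \eqref{equation_sum_of_orthogonal} then yields $\gamma P = \big(\sum_{i=1}^n \mu_i\big) P$, whence $\sum_{i=1}^n \mu_i = \gamma$. Setting $a_i := \mu_i/\gamma$ finishes the argument: $A_i = a_i C$ with $a_i \in \R_+$ and $\sum_{i=1}^n a_i = 1$. I do not foresee any genuine obstacle; the only step worth pausing on is the passage from the operator inequality $0 \le A_i \le C$ to the kernel inclusion, but this is a textbook fact that uses nothing beyond the existence of the positive square root.
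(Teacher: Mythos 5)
Your proof is correct and follows essentially the same route as the paper's: both arguments use the positive square root to show that each $A_i$ must vanish on the orthogonal complement of the line $\Rng(C)$ and hence is a nonnegative multiple of the rank-one projection onto it. The only difference is presentational --- you package the key step as the operator inequality $0\le A_i\le C$ together with the kernel/range duality $\Ker(A_i)^{\perp}=\overline{\Rng(A_i)}$, whereas the paper works with matrix entries in an orthonormal basis and the Cauchy--Schwarz inequality $|\langle A_i e,e'\rangle|^2\le\langle A_i e,e\rangle\langle A_i e',e'\rangle$; the underlying mechanism is identical.
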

\begin{proof}
Let $M:=\Rng(C)=\operatorname{lin}\{\bar e\}$ for some normalized $\bar e\in\H$. 
Set $B$ to be an orthonormal basis of $\H$ containing $\bar e$.
Then, by \eqref{equation_sum_of_orthogonal}, we know that 
\begin{equation*}
\langle A_ie, e\rangle = 0, \quad e\in B\setminus\{\bar e\}, i\in\{1\dots,n\}.
\end{equation*}
This, Cauchy-Schwarz inequality and the square root theorem imply that 
\begin{align*}
|\langle A_i e, e' \rangle|^2 & = |\langle A_i^\hlf e, A_i^\hlf e' \rangle|^2 \\ 
& \le \langle A_i e, e\rangle \langle A_i e', e' \rangle = 0, 
\quad e,e'\in\B, (e,e')\ne(\bar e, \bar e). 
\end{align*}
Thus $A_ie = 0$ for all $i\in\{1\dots,n\}$ and $e\in B\setminus\{\bar e\}$.
Hence for all $i\in\{1\dots,n\}$, $A_i = a_iC$ for some $a_i \in\R_+$.
Now, it follows from \eqref{equation_sum_of_orthogonal} that $\sum_{i=1}^n a_i=1$.
\end{proof}

We will state another lemma which gives an equivalent condition for an operator $U\in\B[\bspace]$ with three nonzero diagonals to be a unitary operator.
It is worth noting that this result can be generalized to arbitrary diagonals, but then it is significantly more complicated.
Thus we present it only for the case in which the three diagonals are located next to each other in the center of the matrix of $U$ (see \eqref{three_diagonal_operator} below).
We leave its proof to the reader.

\begin{lemma}
Assume that $U\in\B[\bspace]$ is an operator of the form
\begin{equation}
\label{three_diagonal_operator}
U = 
\begin{bmatrix}
 \ddots & \ddots	& \ddots 	& \ddots & \ddots	\\
 \ddots & B_{-1} 	& C_{-1} 	& 0      & \ddots  	\\
 \ddots & A_0	   	& \boxed{B_0}	& C_0  	 & \ddots	\\
 \ddots & 0 		& A_1		& B_1 	 & \ddots	\\
 \ddots & \ddots 	& \ddots	& \ddots & \ddots	
\end{bmatrix},
\end{equation}
where $\{A_n\}_{n\in\Z}$, $\{B_n\}_{n\in\Z}$, $\{C_n\}_{n\in\Z}\subseteq\B$.
Then $U$ is unitary if and only if the following
\begin{subequations}
	\begin{align}
		I &= A_n A_n^* + B_n B_n^* + C_n C_n^*, \label{3_diag_1} \\
		0 &= C_n A_{n+2}^*, \label{3_diag_2} \\
		0 &= A_{n+1} B_n^* + B_{n+1} C_n^*, \label{3_diag_3} \\
		I &= A_{n+2}^* A_{n+2} + B_{n+1}^* B_{n+1} + C_n^* C_n, \label{3_diag_4} \\
		0 &= A_{n}^* C_n, \label{3_diag_5} \\
		0 &= C_n^* B_n + B_{n+1}^* A_{n+1}, \label{3_diag_6}
	\end{align}
\end{subequations}
hold for all $n\in\Z$.
\end{lemma}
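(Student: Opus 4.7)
The plan is to unwind the condition $U$ is unitary into the two equations $UU^{*}=I$ and $U^{*}U=I$, and then read these off entry by entry using the block–matrix structure of $U$.

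First I would record that with the indexing inherited from the diagram, the nonzero entries of $U$ are $U_{n,n-1}=A_n$, $U_{n,n}=B_n$ and $U_{n,n+1}=C_n$ for $n\in\Z$, so that each row and each column of $U$ contains at most three nonzero blocks. Consequently the block entry $(UU^{*})_{m,n}=\sum_{k}U_{m,k}U_{n,k}^{*}$ can be nonzero only when $|m-n|\le 2$, and similarly for $U^{*}U$. Thus the unitarity of $U$ reduces to finitely many block equations parametrized by $n\in\Z$: the diagonal, first superdiagonal and second superdiagonal entries of both $UU^{*}$ and $U^{*}U$ (the corresponding subdiagonal entries are just the adjoints of these, so they carry no new information).

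Next I would compute each of these six families of entries. For $UU^{*}$: the $(n,n)$-block is $A_nA_n^{*}+B_nB_n^{*}+C_nC_n^{*}$, giving \eqref{3_diag_1}; the $(n,n+1)$-block is $B_nA_{n+1}^{*}+C_nB_{n+1}^{*}$, whose vanishing is equivalent after taking adjoints to \eqref{3_diag_3}; and the $(n,n+2)$-block has only one surviving term $C_nA_{n+2}^{*}$, giving \eqref{3_diag_2}. For $U^{*}U$, using $(U^{*}U)_{m,n}=\sum_{k}U_{k,m}^{*}U_{k,n}$: the $(n,n)$-block is $C_{n-1}^{*}C_{n-1}+B_n^{*}B_n+A_{n+1}^{*}A_{n+1}$, which after reindexing is \eqref{3_diag_4}; the $(n,n+1)$-block is $B_n^{*}C_n+A_{n+1}^{*}B_{n+1}$, equivalent after adjunction to \eqref{3_diag_6}; and the $(n,n+2)$-block is $A_{n+1}^{*}C_{n+1}$, yielding \eqref{3_diag_5} after a shift of index.

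Finally I would note that running these computations in the reverse direction shows that the six families \eqref{3_diag_1}--\eqref{3_diag_6} together force $UU^{*}=I$ and $U^{*}U=I$, so the listed conditions are not only necessary but sufficient. The only real obstacle is bookkeeping: one has to keep track of which index shift appears in each formula and be careful to take adjoints where the lower triangular part of $UU^{*}$ (resp.\ $U^{*}U$) is generated, but there is no conceptual difficulty beyond that, which is why the author left the verification to the reader.
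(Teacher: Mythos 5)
Your proposal is correct and is exactly the entrywise verification the paper intends (the author explicitly leaves it to the reader): with $U_{n,n-1}=A_n$, $U_{n,n}=B_n$, $U_{n,n+1}=C_n$, the band structure makes every block of $UU^{*}$ and $U^{*}U$ a finite sum, and your six computed families match \eqref{3_diag_1}--\eqref{3_diag_6} after the adjoints and index shifts you indicate. No gaps.
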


The next proposition states that, under some additional assumptions, a unitary operator $U$ defined by \eqref{three_diagonal_operator} may consist of at most two nonzero diagonals.

\begin{proposition}
\label{proposition_isometry}
Assume that $\H$ is two-dimensional, $U\in\B[\bspace]$ is a unitary operator of the form \eqref{three_diagonal_operator}
and $1\in\spec(C_kC_k^*)\cap\spec(C_{k+1}C_{k+1}^*)$ for some $k\in\Z$.
Let $S$ and $USU^*$ be bilateral operator valued weighted shifts with invertible weights.
Then at least one of the sequences $\{A_n\}_{n\in\Z}$, $\{B_n\}_{n\in\Z}$ or $\{C_n\}_{n\in\Z}$ consists of zero operators only.
\end{proposition}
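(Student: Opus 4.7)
The plan is to argue by contradiction: assume all three sequences $\{A_n\}$, $\{B_n\}$, $\{C_n\}$ are nontrivial. By Corollary~\ref{corollary_podstawowy} applied to the intertwining $U S = (USU^*) U$, every entry on each of the three diagonals is then nonzero. I would proceed in three stages: (a) establish rank invariance along each diagonal, (b) use the spectral hypothesis to constrain ranks sharply, (c) plug the resulting rank-one factorizations into one of the unitarity identities to reach a contradiction.

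For stage (a), set $T := USU^*$, which by hypothesis is a bilateral shift with invertible weights. Applying Lemma~\ref{lemat_podstawowy} to $US = TU$ yields the three relations
\begin{equation*}
A_{n+1} S_{n-1} = T_n A_n, \quad B_{n+1} S_n = T_n B_n, \quad C_{n+1} S_{n+1} = T_n C_n, \quad n \in \Z.
\end{equation*}
Since $S_n$ and $T_n$ are invertible and $\H = \C^2$, each of these forces $n \mapsto \operatorname{rank}(A_n)$, $n \mapsto \operatorname{rank}(B_n)$, and $n \mapsto \operatorname{rank}(C_n)$ to be constant functions of $n$.

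For stage (b), pick a unit eigenvector $e_k$ of $C_k C_k^*$ for eigenvalue $1$ (which exists by hypothesis, and $C_k C_k^* \le I$ follows from \eqref{3_diag_1}). Testing \eqref{3_diag_1} at $e_k$ forces $A_k^* e_k = B_k^* e_k = 0$, hence $\operatorname{rank}(A_k), \operatorname{rank}(B_k) \le 1$; combined with nonvanishing and rank constancy, both ranks equal $1$ throughout. Now I would split on $\operatorname{rank}(C_n)$. If this rank is $2$, then $C_n$ is invertible for every $n$, and \eqref{3_diag_5} gives $A_n^* = 0$, contradicting rank one. So $\operatorname{rank}(C_n) = 1$ for all $n$, and together with the eigenvalue-$1$ hypothesis at indices $k$ and $k+1$ this forces $C_k = e_k \gamma_k^*$ and $C_{k+1} = e_{k+1} \gamma_{k+1}^*$ for unit vectors $\gamma_k, \gamma_{k+1}$. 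Meanwhile Lemma~\ref{lemma_orthogonal} applied to \eqref{3_diag_1} yields the rank-one factorizations $A_k = f_k \alpha_k^*$, $B_k = f_k \beta_k^*$ with $f_k \perp e_k$ unit and $\alpha_k, \beta_k$ nonzero, and analogous factorizations at index $k+1$.

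For stage (c), substitute these factorizations into \eqref{3_diag_6} at $n = k$. The summand $C_k^* B_k = \gamma_k \langle f_k, e_k\rangle \beta_k^* = 0$ because $e_k \perp f_k$, leaving $\beta_{k+1} \alpha_{k+1}^* = 0$, which is impossible since $\alpha_{k+1}$ and $\beta_{k+1}$ are nonzero. This contradicts the assumption that all three diagonals are nontrivial. The main obstacle I anticipate is stage (b): propagating the rank bound along every diagonal via stage (a) and correctly extracting the rank-one factorizations leans heavily on $\dim \H = 2$, and the argument would not obviously extend to $\dim \H \ge 3$. Once the factorizations are in hand, the terminal collapse via \eqref{3_diag_6} is a one-line computation.
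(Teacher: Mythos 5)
Your proposal is correct and follows essentially the same route as the paper: dispose of the rank-two case for $C_n$ via \eqref{3_diag_5}, use the eigenvalue-$1$ hypothesis together with Lemma~\ref{lemma_orthogonal} applied to $A_nA_n^*+B_nB_n^*=I-C_nC_n^*$ at $n=k,k+1$ to align the ranges of $A_n$ and $B_n$ orthogonally to that of $C_n$, and then derive the contradiction from \eqref{3_diag_6} at $n=k$. Your explicit rank-one factorizations and the rank-constancy observation from Lemma~\ref{lemat_podstawowy} are just a more concrete rendering of the paper's range-orthogonality argument.
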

\begin{proof}
Note that, by Corollary \ref{corollary_podstawowy}, if any element of any of the three sequences is the zero operator, then all the operators in this sequence are zero operators.

First, we assume that $C_n \ne 0$ for all $n\in\Z$. Otherwise, the proof is completed.
Now, let $\dim\Rng(C_n) = 2$ for some $n\in\Z$. 
Since $C_n$ is invertible, then, by \eqref{3_diag_5}, we get that $A_n^* = 0$ which means that $A_n = 0$.

Now assume that $\dim\Rng(C_n) = 1$ for all $n\in\Z$. 
Let $n\in\{k, k+1\}$.
Since $1\in\spec(C_nC_n^*)$, then $Z_n:=I-C_nC_n^*$ is not invertible positive operator.
Then, by \eqref{3_diag_1}, we see that
$$
	A_n A_n^* + B_n B_n^* = Z_n
$$
so, by Lemma~\ref{lemma_orthogonal}, we have $A_n A_n^* = a_n Z_n$ and $B_n B_n^* = b_n Z_n$, where $a_n+b_n = 1$.
If $a_nb_n=0$ for any $n\in\{k, k+1\}$, then the proof is completed.
Otherwise, by \eqref{3_diag_5}, for $n\in\{k, k+1\}$,
\begin{equation}
\label{1235113}
R(C_n) \perp R(A_n) = R(A_nA_n^*) = R(B_nB_n^*) = R(B_n).
\end{equation}
Hence \eqref{3_diag_6} with $n=k$ implies $0 = B_{k+1}^* A_{k+1}$ which, together with $\eqref{1235113}$ for $n=k+1$ and the fact that $\Rng(A_{k+1})\ne \{0\}$, lead to contradiction.
\end{proof}

Observe that Proposition~\ref{proposition_isometry} remains true, if we replace the assumption that $1\in\spec(C_kC_k^*)\cap\spec(C_{k+1}C_{k+1}^*)$ by the assumption that $1\in\spec(A_k^*A_k)\cap\spec(A_{k+1}^*A_{k+1})$ for some $k\in\Z$.

\begin{proposition}
Let $\H$ be two-dimensional Hilbert space and $U$ be a unitary operator defined as in \eqref{three_diagonal_operator}.
Assume that $S$ and $USU^*$ are bilateral operator valued weighted shifts with invertible weights and $\dim\Rng(B_n) \le 1$ for any $n\in\Z$.
Then for all $n\in\Z$ either $A_n$ or $C_n$ is a partial isometry.
\end{proposition}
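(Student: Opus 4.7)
The plan is to fix $n\in\Z$ and perform a rank case analysis on $A_n$ and $C_n$, using two of the unitary relations from the preceding lemma: the orthogonality $A_n^* C_n = 0$ from \eqref{3_diag_5} and the completeness identity $A_nA_n^* + B_nB_n^* + C_nC_n^* = I$ from \eqref{3_diag_1}. By Lemma~\ref{lemma_isometries}, it suffices to exhibit one of $A_nA_n^*$ or $C_nC_n^*$ as an orthogonal projection.

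First I would note that $A_n^* C_n = 0$ gives $\Rng(C_n) \subseteq \Ker(A_n^*) = \Rng(A_n)^\perp$, so these two ranges lie in orthogonal subspaces of $\C^2$. If $\dim \Rng(A_n) = 2$, then $\Rng(A_n)^\perp = \{0\}$ forces $C_n = 0$, which is trivially a partial isometry; by symmetry the same is true if $\dim \Rng(C_n) = 2$ with $A_n$ in place of $C_n$. The subcases $A_n = 0$ or $C_n = 0$ are likewise immediate. So the only interesting situation is when both $A_n$ and $C_n$ have range of dimension exactly one.

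In that case I would pick unit vectors $u, v$ spanning $\Rng(A_n)$ and $\Rng(C_n)$; by the orthogonality observed above, $\{u,v\}$ is an orthonormal basis of $\C^2$. Writing $A_nA_n^* = \alpha\, u u^*$ and $C_nC_n^* = \gamma\, v v^*$ for some $\alpha,\gamma>0$, the identity \eqref{3_diag_1} becomes
$$
B_nB_n^* = (1-\alpha)\, uu^* + (1-\gamma)\, vv^*,
$$
which is diagonal in the basis $\{u,v\}$. Positivity of $B_nB_n^*$ forces $\alpha,\gamma \le 1$, and the hypothesis $\dim\Rng(B_n)\le 1$, which transfers to $\dim\Rng(B_nB_n^*)\le 1$, forces at least one of $1-\alpha$, $1-\gamma$ to vanish. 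If $\alpha = 1$, then $A_nA_n^* = uu^*$ is a rank-one orthogonal projection and Lemma~\ref{lemma_isometries} makes $A_n$ a partial isometry; if $\gamma = 1$, the same conclusion holds for $C_n$.

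I do not anticipate a genuine obstacle: the argument is essentially a refinement of the endgame of the proof of Proposition~\ref{proposition_isometry}, where the spectral hypothesis on $C_kC_k^*$ is replaced by the uniform rank bound on $B_n$ to pin down one of the ``leftover'' coefficients to be exactly $1$. The role of the shift and invertibility assumptions on $S$ and $USU^*$ is just to ensure the setting is meaningful; the identities \eqref{3_diag_1} and \eqref{3_diag_5} from unitarity of $U$ carry the entire local argument.
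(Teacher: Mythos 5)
Your proof is correct, and it reaches the conclusion by a genuinely different mechanism than the paper's. The paper works algebraically: multiplying \eqref{3_diag_1} on the right by $C_n$ and using \eqref{3_diag_5} gives $C_nC_n^*C_n + B_nB_n^*C_n = C_n$, and it then splits on whether $B_nB_n^*C_n$ vanishes --- if it does, $C_nC_n^*C_n=C_n$ and Lemma~\ref{lemma_isometries}(v) applies to $C_n$; if not, the rank bound forces $\Rng(B_nB_n^*)=\Rng(B_nB_n^*C_n)\subseteq\Rng(C_n)\subseteq\Ker(A_n^*)$, whence $B_nB_n^*A_n=0$ and $A_nA_n^*A_n=A_n$. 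You instead diagonalize: in the only nontrivial case both $A_n$ and $C_n$ have rank one, \eqref{3_diag_5} makes their ranges an orthonormal pair $\{u,v\}$, and \eqref{3_diag_1} together with $\dim\Rng(B_nB_n^*)\le 1$ pins one of the coefficients in $A_nA_n^*=\alpha uu^*$, $C_nC_n^*=\gamma vv^*$ to be exactly $1$, so that one of these is an orthogonal projection and Lemma~\ref{lemma_isometries}(iv) applies. Your version is more self-contained (the paper defers the $\dim\Rng(C_n)\ne 1$ cases to the argument of Proposition~\ref{proposition_isometry}) and makes explicit that only unitarity of $U$ --- not the hypotheses on $S$ and $USU^*$ --- is needed for the pointwise conclusion; the paper's manipulation has the mild advantage that its main case does not use $\dim\H=2$ (your identity $I=uu^*+vv^*$ does), though that is immaterial for the statement as given.
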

\begin{proof}
Note that the same argument as in Proposition~\ref{proposition_isometry} can be used to cover the case when $\dim\Rng(C_n)\ne 1$ for any $n\in\Z$.

Let us now assume that $\dim\Rng(C_n) = 1$ for all $n\in\Z$.
Fix $n\in\Z$. 
By the above it us true that $\dim\Rng(B_nB_n^*C_n)$ is equal to $0$ or $1$.
If it is equal to $1$, then we see that $\Rng(B_nB_n^*C_n) = \Rng(B_nB_n^*)$,
as $\dim \Rng(B_nB_n^*)\le 1$.
Therefore $A_n^*B_nB_n^* = 0$.
Hence $A_n$ is a partial isometry.
Now assume $\dim\Rng(B_nB_n^*C_n) = 0$. 
This implies that $C_n$ is a partial isometry.
\end{proof}

The next result states that there cannot be more then $m$ nonzero diagonals which contain partial isometries in unitary operator giving unitary equivalence of bilateral operator valued weighted shifts defined on $m$-dimensional Hilbert space for $m\ge 2$.

\begin{proposition}
\label{proposition_general}
Let $\H$ be a $m$-dimensional Hilbert space for $m\ge2$ and let $\bshift{S}$, $\bshift{T}$ have quasi-invertible weights.
Assume $U\in\B[\bspace]$ is unitary and its matrix representation consists of partial isometries only.
If $US=TU$, then $U$ has at most $m$ nonzero diagonals and all other elements of $U$ are zero operators.
\end{proposition}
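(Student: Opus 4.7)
The plan is to combine two facts: first, that Corollary~\ref{corollary_podstawowy} forces the support of the matrix of $U$ to be invariant under translation along diagonals, so it suffices to count nonzero entries in a single row of $U$; and second, that in finite dimension the row identity coming from $UU^{*}=I$ can be turned into a sharp rank count once every entry is known to be a partial isometry.

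First I would note that, because $\H$ is finite-dimensional, every quasi-invertible operator on $\H$ is automatically invertible, so the weights of $S$ and $T$ are invertible. Hence Corollary~\ref{corollary_podstawowy} applies to the intertwining $US=TU$: whenever $U_{i,j}\ne 0$, the whole diagonal $\{U_{i+n,j+n}\}_{n\in\Z}$ consists of nonzero operators. In particular, each diagonal of $U$ is either identically zero or entirely nonzero, and the number of nonzero diagonals of $U$ coincides with the number of indices $k\in\Z$ for which $U_{0,k}\ne 0$.

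Next I would extract from $UU^{*}=I$ the $(0,0)$-entry of the product in matrix form, namely
\[
\sum_{k\in\Z} U_{0,k} U_{0,k}^{*} = I,
\]
with convergence in the strong operator topology. Since each $U_{0,k}$ is a partial isometry, Lemma~\ref{lemma_isometries} makes $U_{0,k}U_{0,k}^{*}$ the orthogonal projection onto $\Rng(U_{0,k})$, whose trace equals $\operatorname{rank}(U_{0,k})$. Fixing an orthonormal basis $e_1,\dots,e_m$ of $\H$, writing $\operatorname{tr}(U_{0,k}U_{0,k}^{*})=\sum_{\ell=1}^m\|U_{0,k}^{*}e_\ell\|^2$, and applying the identity above to each $e_\ell$, Tonelli's theorem for the nonnegative double sum legitimises the interchange and yields $\sum_{k\in\Z}\operatorname{rank}(U_{0,k})=m$.

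Since every nonzero partial isometry has rank at least one, at most $m$ of the terms in this sum can be nonzero, so $U$ has at most $m$ nonzero entries in row $0$, hence at most $m$ nonzero diagonals. I do not foresee any serious obstacle: the only mildly delicate step is the trace-interchange, which is immediate from the nonnegativity of the summands and the finiteness of the $\ell$-sum.
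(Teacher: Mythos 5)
Your proof is correct and follows essentially the same route as the paper: both reduce to counting the nonzero entries in a single row via Corollary~\ref{corollary_podstawowy}, and both exploit the row identity $\sum_{k}U_{0,k}U_{0,k}^{*}=I$ together with Lemma~\ref{lemma_isometries} to see that each summand is an orthogonal projection onto $\Rng(U_{0,k})$. The only difference is the final count: the paper invokes the mutual orthogonality of the ranges of projections summing to a projection and then a dimension count, whereas you take traces and count ranks --- an equally valid and slightly more self-contained way to conclude that at most $m$ summands are nonzero.
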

\begin{proof}
By the fact that $UU^* = I$ we get that $\sum_{j\in\Z} U_n^{(j)} (U_n^{(j)})^* = I$ for all $n\in\Z$.
It follows from the Lemma~\ref{lemma_isometries} that $P_n^{(j)}:=U_n^{(j)} (U_n^{(j)})^*$ is an orthogonal projection for all $j\in\Z$.
It is a well-known fact that, if $\sum_{j\in\Z} P_n^{(j)}$ is an orthogonal projection, then $\Rng(P_n^{(i)})\perp \Rng(P_n^{(j)})$ for all $i$,$j\in\Z$ such that $i\ne j$.
The rest follows directly from the fact that $\H$ is $m$-dimensional and from Corollary \ref{corollary_podstawowy}.
\end{proof}

The next example shows that it is possible to find unitary operator with three nonzero diagonals that give unitary equivalence between bilateral operator valued weighted shifts defined on $\C^2$.

\begin{example}
\label{example_three_trivial}
\def\shlf{\frac{1}{\sqrt{2}}}
Assume $\H = \C^2$.
First, let us define unitary operator $U$ of the form \eqref{three_diagonal_operator} with three nonzero diagonals, where
$$
A_{2n} = 
\begin{bmatrix}
  \shlf & 0 \\
  0 & 0
\end{bmatrix},
\quad
A_{2n+1} = 
\begin{bmatrix}
  0 & 0 \\
  0 & -\shlf
\end{bmatrix},
\quad 
B_n = 
\begin{bmatrix}
  \shlf & 0 \\
  0 & \shlf
\end{bmatrix},
\quad
C_{n+1} = -A_n,
$$
for all $n\in\Z$.
It can be verified that conditions \eqref{3_diag_1} - \eqref{3_diag_6} are satisfied.
Hence $U$ is a unitary operator.
Let us now define $\bshift{S}$ in the following way
$$
S_{n} = 
\begin{bmatrix}
  0 & (-1)^{n} \\
  (-1)^{n} & 0
\end{bmatrix},
\quad n\in\Z.
$$
The reader can check that $US = SU$.
\end{example}

\section{Further remarks}

Example \ref{example_three_trivial} shows that it is possible to find unitary operator with three nonzero diagonals, which gives unitary equivalence of bilateral weighted shifts defined on $\C^2$, however, this unitary equivalence can be given by the identity operator. 
It is an open question, whether for $\bshift{S}$, $\bshift{T}$, where $\H$ is a two-dimensional Hilbert space, $S\ue T$ implies that there exists $U$ that has at most two nonzero diagonals with all other elements of $U$ being zero operators such that $S\ue_U T$.
If one proves that any unitary equivalence of bilateral shifts defined on finite-dimensional Hilbert space can be given by an operator consisting only of partial isometries, then Proposition \ref{proposition_general} gives the positive answer.

Another interesting problem for further investigation, which comes up naturally, is the problem of characterization of unitary equivalence of bilateral shifts defined on finite-dimensional Hilbert space.
Corollary \ref{corollary_main_theorem} gives characterization of unitary equivalence given only by an operator of diagonal form.
Example \ref{unitary_equiv_ex} shows that there is a rich class of unitary operators in $\bspace[\C^k]$ which are not of diagonal form and can give unitary equivalence of bilateral weighted shifts.
Clearly, we see that the problem of complete characterization is more complicated than in case of unilateral operator valued weighted shifts and bilateral weighted shifts having classical weights.

\section*{Acknowledgments}

The author would like to thank Zenon Jab\l o\'nski for insightful discussions concerning the subject of the paper.

\end{document}